\theoremstyle{definition}
\newtheorem{theorem}{Theorem}[section]
\newtheorem{criterion}{Criterion}[section]
\newtheorem{lemma}[theorem]{Lemma}
\newtheorem{Remark}[theorem]{Remark}
\newenvironment{remark}{\begin{Remark}\rm}{\end{Remark}}
\newtheorem{Example}[theorem]{Example}
\newtheorem{Question}[theorem]{Question}
\numberwithin{equation}{section}
\begin{document}

\title{Classification of positive elliptic-elliptic rotopulsators on Clifford tori}

\author{Pieter Tibboel \\
Bernoulli Institute for Mathematics\\
University of Groningen\\
P.M.J.Tibboel@rug.nl\\
https://orcid.org/0000-0001-6292-897X}




\maketitle
\begin{abstract}
  We prove that positive elliptic-elliptic rotopulsator solutions of the $n$-body problem in spaces of constant Gaussian curvature that move on Clifford tori of nonconstant size either lie on great circles, or project onto regular polygons. We additionally prove for the case that the configurations project onto regular polygons that all masses are equal and show that all these different types of positive elliptic-elliptic rotopulsator exist.
\end{abstract}
{\bf Keywords:} $n$-body problems; curved $n$-body problem; ordinary differential equations; dynamical systems; celestial mechanics
\section{Introduction}
  By the $n$-body problem in spaces of constant Gaussian curvature, or curved $n$-body problem for short, we mean the problem of finding the dynamics of point masses that move on a sphere, or a hyperboloid. Specifically, if $\sigma=\pm 1$, then we mean the problem of finding point masses \begin{align*}q_{1}, \ldots, q_{n}\in\mathbb{M}_{\sigma}^{3}=\{(x_{1},x_{2},x_{3},x_{4})^{T}\in\mathbb{R}^{4}\mid x_{1}^{2}+x_{2}^{2}+x_{3}^{2}+\sigma x_{4}^{2}=\sigma\}\end{align*} with respective masses $m_{1}>0$, \ldots, $m_{n}>0$, solving the system of differential equations
  \begin{align}\label{EquationsOfMotion Curved}
   \ddot{q}_{i}=\sum\limits_{\substack{j=1\\j\neq i}}^{n}\frac{m_{j}(q_{j}-\sigma(q_{i}\odot q_{j})q_{i})}{(\sigma -\sigma(q_{i}\odot q_{j})^{2})^{\frac{3}{2}}}-\sigma(\dot{q}_{i}\odot\dot{q}_{i})q_{i},\quad i\in\{1, \ldots,\quad n\},
  \end{align}
  where for $x$, $y\in\mathbb{M}_{\sigma}^{3}$  the product $\cdot\odot\cdot$ is defined as
  \begin{align*}
    x\odot y=x_{1}y_{1}+x_{2}y_{2}+x_{3}y_{3}+\sigma x_{4}y_{4}.
  \end{align*}
  The study of the curved $n$-body problem has applications to, for example, geometric mechanics, Lie groups and algebras, non-Euclidean and differential geometry and stability theory, the theory of polytopes and topology (see for example \cite{D6}) and for $n=2$ goes back as far as the 1830s (see \cite{BM,BMK,CRS,DK,DPS1,DPS2,DPS3} and \cite{KH} for a historical overview and recent results). However, the first paper giving an explicit $n$-body problem in spaces of constant Gaussian curvature for general $n\geq 2$ was published in 2008 by Diacu, P\'erez-Chavela and Santoprete (see \cite{DPS1,DPS2,DPS3}). This breakthrough then gave rise to further results for the $n\geq 2$ case in \cite{DeDZ}, \cite{D1,D2,D3,D4,D5,D6,D7,DK,DPo,DT,PS,PST,SZ,T,T2,T3,T4,T5,T6,T7,T9,T10,Z1,YZ,ZZ} and the references therein.

  Rotopulsators are solutions to (\ref{EquationsOfMotion Curved}) consisting of any orbit induced by a (possibly hyperbolic) rotation, but otherwise impose few restrictions on the position vectors of the point masses. They were introduced in \cite{DK} by Diacu and Kordlou as generalisations of homographic orbit solutions, i.e. solutions of which the shape of the configuration stays the same over time, but the size may change, i.e. the ratio of any two distances between point masses is constant.
  Specifically, let
  \begin{align*}
    R(x)=\begin{pmatrix}
      \cos{x} & -\sin{x} \\
      \sin{x} & \cos{x}
    \end{pmatrix}\quad\textrm{and}\quad S(x)=\begin{pmatrix}
      \cosh{x} & \sinh{x} \\
      \sinh{x} & \cosh{x}
    \end{pmatrix}.
  \end{align*}
  If we write $q_{i}=(q_{i1},q_{i2},q_{i3},q_{i4})^T$, $i\in\{1, \ldots, n\}$, then we call any solution $q_{1}, \ldots,$ $q_{n}$  of (\ref{EquationsOfMotion Curved}) a \textit{rotopulsator}, if there exist nonnegative scalar functions $r_{i}$, $\rho_{i}$ for which $r_{i}^{2}+\sigma\rho_{i}^{2}=\sigma$, scalar functions $\theta$, $\phi$ and constants $\alpha_{i}$, $\beta_{i}\in\mathbb{R}$ such that
  \begin{align}
    &\begin{pmatrix}
      q_{i1}\\ q_{i2}
    \end{pmatrix}=r_{i}R(\theta+\alpha_{i})\begin{pmatrix}
      1 \\ 0
    \end{pmatrix},\quad\begin{pmatrix}
      q_{i3}\\ q_{i4}
    \end{pmatrix}=\rho_{i}R(\phi+\beta_{i})\begin{pmatrix}
      1 \\ 0
    \end{pmatrix},\quad\textrm{if}\quad\sigma=1,\\
    &\quad\textrm{or}\quad\begin{pmatrix}
      q_{i3}\\ q_{i4}
    \end{pmatrix}=\rho_{i}S(\phi+\beta_{i})\begin{pmatrix}
      0 \\ 1
    \end{pmatrix}\quad\textrm{if}\quad\sigma=-1.\label{Rotopulsator identities}
  \end{align}
  If $\sigma=1$ and the $\alpha_{i}$ are not necessarily all the same value and the same holds true for the $\beta_{i}$, then we call such a solution a \textit{positive elliptic-elliptic rotopulsator}.

  A potentially useful application of rotopulsators is that they may give information about the geometry of the universe: For example, Diacu, P\'erez-Chavela and Santoprete (see \cite{DPS1}, \cite{DPS2}) showed that a relative equilibrium (a rotopulsator induced by a rotation only, i.e. the shape and size of the configuration do not change over time) with an equilateral triangle configuration has to have equal masses in spaces of constant Gaussian curvature. As the Sun, Jupiter and the Trojan asteroids (approximately) have an equilateral triangle configuration and their masses are unequal, this means that the space occupied by their configuration is likely flat. \newline

  Secondly, with continued investigations into whether solutions to the curved $n$-body problem can be related to the classical $n$-body problem (see for example \cite{BGPR,D7}), rotopulsators seem to be good candidates for solutions to the curved $n$-body problem that are somehow related to homographic orbit solutions and relative equilibria solutions to the Newtonian $n$-body problem. It may be possible to use results for rotopulsators (see for example \cite{T,T2,T6}) and using such a relationship to shed a new light on various open problems related to polygonal relative equilibria (see \cite{ACS} for such open problems). \newline

  Finally, because of their very general setup, investigating rotopulsators has a strong potential to give rise to new dynamics of orbits in curved space. \newline

  The most commonly investigated class of rotopulsators in the literature is the class of rotopulsators for which $r_{i}$, $\rho_{i}$ are independent of $i$, $i\in\{1,\ldots,n\}$ (see for example \cite{DK,DT,T,T2,T6}). For general $n$ it was proven in \cite{T6} for this case that hyperbolic rotopulsators, i.e. if $\sigma=-1$, the configuration of the point masses has to be a regular polygon and all masses have to be equal. In \cite{T2} it was proven for the same class, but now for rotopulsators on $\mathbb{S}^{3}$, i.e. $\sigma=1$, that if the $\beta_{i}$ are all equal, the configuration of the point masses has to be a regular polygon as well and in \cite{T6} it was proven that then the masses have to be equal. The only remaining case to be conclusively investigated of this subclass of rotopulsators is the case that the solution to (\ref{EquationsOfMotion Curved}) is a positive elliptic-elliptic rotopulsator and $r_{i}=:r$ and $\rho_{i}=:\rho$ are independent of $i$ and not constant, i.e. the point masses move along a Clifford torus
  \begin{align*}\{(x_{1},x_{2},x_{3},x_{4})\in\mathbb{R}^{4}\mid x_{1}^{2}+x_{2}^{2}=r^{2},\quad x_{3}^{2}+x_{4}^{2}=\rho^{2}\}, \end{align*}
  or equivalently, the vectors
  \begin{align*}
    \begin{pmatrix}
      q_{i1}\\ q_{i2}\end{pmatrix}\quad\textrm{and}\quad\begin{pmatrix}
      q_{i3}\\ q_{i4}
    \end{pmatrix},\quad i\in\{1,\ldots,n\}
  \end{align*}
  correspond to vertices of polygons inscribed in circles of radius $r$ and $\rho$ respectively.
  \begin{remark} The most general results for $n>3$ on classifying solutions of this type for the case that the $\beta_{i}$ are not necessarily all equal was for $n=4$ in \cite{DT}, but under the assumption that the configurations are rectangular. In this paper we classify all rotopulsators of this type for general $n$, without any assumptions on the configurations. \end{remark}
  \begin{remark}
    Throughout this paper we will use the following terminology when it comes to vertices of polygons: Let $p\in\mathbb{N}$ and let $v_{1}$, \ldots, $v_{p}\in\mathbb{R}^{2}$ be not necessarily distinct vectors. If there exist distinct vectors $\{v_{a_{j}}\}_{j=1}^{k}\subset\{v_{i}\}_{i=1}^{p}$ such that for all $i\notin\{a_{j}\}_{j=1}^{k}$ we have that there is a $\widehat{j}\in\{a_{j}\}_{j=1}^{k}$ such that $v_{i}=v_{\widehat{j}}$, then we say that the vectors $v_{1}$, \ldots, $v_{p}$ represent the vertices of a polygon with $k$ vertices.
  \end{remark}
  In this paper, we will prove that all positive elliptic-elliptic rotopulsators for which $r_{i}$ and $\rho_{i}$ are not constant and independent of $i$ either have configurations of point masses that are regular polygons with all masses equal, project onto regular polygons and have all masses equal, or have configurations that lie on great circles. Specifically:
  \begin{theorem}\label{Main Theorem 1}
    Let $i\in\{1,\ldots,\quad n\}$. Let $q_{1}$, \ldots, $q_{n}$ be a positive elliptic-elliptic rotopulsator solution of (\ref{EquationsOfMotion Curved}) with $r_{i}$ and $\rho_{i}$ independent of $i$ and not constant. Let $V_{i}=\{j\in\{1,\ldots,n\}\mid\beta_{j}=\beta_{i}\}$ and $W_{i}=\{j\in\{1,\ldots,n\}\mid\alpha_{j}=\alpha_{i}\}$. If there is an $i\in\{1,\ldots,n\}$ such that $|V_{i}|>1$, then $|V_{i}|=|V_{k}|$ for all $i$, $k\in\{1,\ldots,n\}$, $|W_{i}|=|W_{k}|$ for all $i$, $k\in\{1,\ldots,n\}$, the vectors $(q_{j1},q_{j2})^{T}$, $j\in\{1,\ldots,n\}$ represent the vertices of a regular polygon with $|V_{i}|$ vertices, the vectors $(q_{j3},q_{j4})^{T}$, $j\in\{1,\ldots,n\}$ represent the vertices of a regular polygon with $|W_{i}|$ vertices and all masses are equal.
  \end{theorem}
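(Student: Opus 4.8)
The plan is to substitute the rotopulsator ansatz \eqref{Rotopulsator identities} (with common $r_i=r$, $\rho_i=\rho$, $r^2+\rho^2=1$) into the equations of motion \eqref{EquationsOfMotion Curved} and extract a system of constraints on the angles $\alpha_i,\beta_i$, the masses $m_j$, and the functions $\theta,\phi,r,\rho$. First I would compute $q_i\odot q_j$; since both points lie on the Clifford torus, $q_i\odot q_j = r^2\cos(\alpha_i-\alpha_j)+\rho^2\cos(\beta_i-\beta_j)$, a \emph{constant} in time for each pair $(i,j)$. This is the crucial simplification: all mutual distances are constant, so the configuration is a relative-equilibrium-like rigid body whose only time dependence is the pair of rotation angles $\theta(t),\phi(t)$ and the breathing radii $r(t),\rho(t)$ (which are linked by $r^2+\rho^2=1$). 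I would then write $\ddot q_i$ in the rotating frame, splitting each 4-vector into its $(1,2)$-block and $(3,4)$-block, and collect the $\cos(\theta+\alpha_i),\sin(\theta+\alpha_i)$ and $\cos(\phi+\beta_i),\sin(\phi+\beta_i)$ components separately. Matching coefficients produces, for each $i$, identities of the schematic form
\begin{align*}
  \sum_{j\neq i} \frac{m_j\big(\cos(\alpha_j-\alpha_i)-(q_i\odot q_j)\big)}{\big(1-(q_i\odot q_j)^2\big)^{3/2}} &= f(t),\qquad
  \sum_{j\neq i} \frac{m_j \sin(\alpha_j-\alpha_i)}{\big(1-(q_i\odot q_j)^2\big)^{3/2}} = 0,
\end{align*}
together with the analogous pair in the $\beta$'s, where $f(t)$ is a function built from $\theta,\phi,r,\rho$ and their derivatives that is \emph{independent of $i$} (this independence is itself part of what must be checked, and it forces $\dot\theta,\dot\phi$ to satisfy further relations, likely $\dot\theta,\dot\phi$ constant or simply related).

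Next I would exploit the sine-sum equations. Introduce the auxiliary quantities $\mu_{ij}=m_j\big(1-(q_i\odot q_j)^2\big)^{-3/2}$, which are symmetric in a suitable sense once masses are handled, and consider the two "balance" conditions $\sum_{j\neq i}\mu_{ij}\sin(\alpha_j-\alpha_i)=0$ and $\sum_{j\neq i}\mu_{ij}\sin(\beta_j-\beta_i)=0$. I would argue that, under the standing hypothesis that some $|V_i|>1$ (so at least two point masses share a $\beta$-value, i.e. genuinely sit on a common circle of radius $\rho$ in the $(3,4)$-plane), the only way to satisfy all these constraints simultaneously with positive masses is that within each level set $V_i$ the $(q_{j1},q_{j2})^T$ are equally spaced, and likewise for the $W_i$ and the $(3,4)$-block; a standard convexity/centroid argument (the weighted sum of unit vectors vanishing forces a regular arrangement when combined with the cosine-sum equations) gives regularity. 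To pin down that $|V_i|$ and $|W_i|$ are independent of $i$ and that the whole configuration — not just each level set — is a single regular polygon, I would feed the regularity back into the cosine-sum equations and use that $r,\rho$ are nonconstant: differentiating the $i$-independent relation $f(t)$ in $t$ yields extra equations that collapse the possible distinct angle-gaps to a single common one, so $V_i$'s partition $\{1,\dots,n\}$ into equal blocks and similarly for $W_i$. Finally, equality of masses: with the configuration a regular polygon, the coefficient matrix of the linear system in the $m_j$ obtained from the $i$-indexed equations becomes circulant (or block-circulant), and its structure — invertible except for the obvious constant null direction — forces all $m_j$ equal; this is exactly the kind of argument used in \cite{T2,T6} for the constant-radius subcases.

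The main obstacle I anticipate is the bookkeeping needed to prove that the "breathing" function $f(t)$ is genuinely the same for all $i$ and that the $r,\rho$ dynamics is consistent — i.e. separating the part of the analysis that is purely combinatorial/algebraic (angles, masses, regular polygon) from the part that constrains the radial motion. Because $r,\rho$ are assumed nonconstant, one cannot simply treat the system as a relative equilibrium; instead one must show that the nonconstancy is compatible \emph{only} with the symmetric configurations, which likely requires differentiating the matched equations in $t$ one or more times and checking that no new obstruction kills the solution (so that existence, claimed in the abstract, still holds). A secondary subtlety is handling the degenerate sub-cases where some point masses lie on a great circle (some $r$ or $\rho$ effectively vanishing for a subset, or coincident angle values collapsing a polygon), which is presumably where the "lie on great circles" alternative of the main classification enters and must be carefully separated out rather than proved impossible.
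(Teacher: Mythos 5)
Your proposal contains a decisive error at the very first step: you assert that $q_i\odot q_j=r^2\cos(\alpha_i-\alpha_j)+\rho^2\cos(\beta_i-\beta_j)$ is constant in time, and build the rest of the argument on the resulting ``rigid body'' picture. Under the hypotheses of Theorem~\ref{Main Theorem 1} the functions $r$ and $\rho$ are nonconstant, so (writing $\rho^2=1-r^2$) this inner product equals $\cos(\beta_j-\beta_i)+r^2\bigl(\cos(\alpha_j-\alpha_i)-\cos(\beta_j-\beta_i)\bigr)$ and is constant in time precisely when $\cos(\alpha_j-\alpha_i)=\cos(\beta_j-\beta_i)$. If that held for all pairs, all mutual distances would be constant and one would be in the situation of Theorem~\ref{Main Theorem 2}, whose conclusion is the great-circle alternative, not the regular polygon. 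The nonconstancy of $q_i\odot q_j$ through $r(t)$ is in fact the engine of the paper's proof: since $r$ ranges over a continuum, the functions $r\mapsto(1-(q_i\odot q_j)^2)^{-3/2}$ appearing in \eqref{Crit1}--\eqref{Crit3} are linearly independent unless their singularities coincide (Lemma~\ref{Lemma 2}), which forces the cosines of the angle differences to match up pair by pair; the regular-polygon conclusion and the equality of the masses are then extracted by a combinatorial cancellation analysis, with Lemma~\ref{Lemma 3} needed to exclude antipodal degeneracies.

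The second gap is the claimed ``convexity/centroid argument'': vanishing of the weighted sine sums $\sum_{j\neq i}\mu_{ij}\sin(\alpha_j-\alpha_i)=0$ with positive weights does \emph{not} force equal spacing. The paper itself notes (in the proof of Theorem~\ref{Main Theorem 2}, citing \cite{D2}) that already for $n=3$ these balance equations admit solutions whose configuration is not a regular polygon. Regularity only follows after the linear-independence step has split the single identity \eqref{Crit1} (and likewise \eqref{Crit3}) into many separate identities, one for each admissible value of the pair $\bigl(\cos(\alpha_j-\alpha_i),\cos(\beta_j-\beta_i)\bigr)$, and even then the argument requires ordering the angles within each level set $V_i$, comparing the minimal gap with the others, and tracking which terms can cancel against which; none of this is supplied by a centroid argument. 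Your instinct to ``differentiate the $i$-independent relation in $t$'' is the right one in spirit (it is essentially equivalent to the linear-independence-in-$r$ device), but it is incompatible with your opening claim that the inner products are constant, and as written the proposal would not produce a proof.
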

  \begin{theorem}\label{Main Theorem 2}
    There exist positive elliptic-elliptic rotopulsator solutions $q_{1}$, \ldots, $q_{n}$ of (\ref{EquationsOfMotion Curved}) with $r_{i}$ and $\rho_{i}$ independent of $i$ and not constant and $\|q_{i}(t)-q_{j}(t)\|$ constant for all $i$, $j\in\{1,\ldots,n\}$, where $\|\cdot\|$ represents the Euclidean norm on $\mathbb{R}^{4}$. For all these solutions we have that $\alpha_{i}=\beta_{i}$ for all $i\in\{1,\ldots,n\}$ and any such solution has a (not necessarily regular) polygonal configuration, where all the point masses lie on a great circle.
  \end{theorem}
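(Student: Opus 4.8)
The plan is to split Theorem~\ref{Main Theorem 2} into a structural part --- every such solution satisfies $\alpha_i=\beta_i$ and is a polygon on a great circle --- and an existence part, and to treat them in that order.

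For the structural part, since the motion takes place on $\mathbb{S}^{3}$ we have $\|q_i-q_j\|^{2}=2-2\,(q_i\odot q_j)$, and the rotopulsator identities together with $r_i=:r$, $\rho_i=:\rho$ give $q_i\odot q_j=r^{2}\cos(\alpha_i-\alpha_j)+\rho^{2}\cos(\beta_i-\beta_j)$. Using $r^{2}+\rho^{2}=1$ this equals $\cos(\beta_i-\beta_j)+r^{2}\big(\cos(\alpha_i-\alpha_j)-\cos(\beta_i-\beta_j)\big)$, so, since $r$ is nonconstant, $\|q_i-q_j\|$ being constant forces $\cos(\alpha_i-\alpha_j)=\cos(\beta_i-\beta_j)$ for all $i,j$, hence for each pair that $\alpha_i-\beta_i\equiv\alpha_j-\beta_j$ or $\alpha_i+\beta_i\equiv\alpha_j+\beta_j\pmod{2\pi}$. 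A short dichotomy argument (if the $\alpha_i-\beta_i$ are not all congruent, pick a pair, say $\{1,2\}$, with $\alpha_1-\beta_1\not\equiv\alpha_2-\beta_2$; then $\alpha_1+\beta_1\equiv\alpha_2+\beta_2$, and for every $k$ the relations for $\{1,k\}$ and $\{2,k\}$ force $\alpha_k+\beta_k\equiv\alpha_1+\beta_1$) shows that either all $\alpha_i-\beta_i$ are equal mod $2\pi$, or all $\alpha_i+\beta_i$ are. Absorbing one constant into $\theta$ and one into $\phi$, and in the second case applying the isometry $x_4\mapsto-x_4$ of $\mathbb{S}^{3}$ (which preserves (\ref{EquationsOfMotion Curved}) and replaces the data $(\phi,\beta_i)$ by $(-\phi,-\beta_i)$), we reduce to $\alpha_i=\beta_i$ for all $i$. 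Writing then $\psi_i=\theta+\alpha_i$ and $\delta=\phi-\theta$, angle addition gives $q_i=\cos\psi_i\,u+\sin\psi_i\,v$ with $u=(r,0,\rho\cos\delta,\rho\sin\delta)^{T}$, $v=(0,r,-\rho\sin\delta,\rho\cos\delta)^{T}$, where $\|u\|=\|v\|=1$ and $u\odot v=0$; hence at each instant all the $q_i$ lie on the great circle $\operatorname{span}\{u,v\}\cap\mathbb{S}^{3}$ at angular positions $\psi_i$ with constant pairwise gaps $\alpha_i-\alpha_j$, i.e.\ on a rigid, not necessarily regular, polygon.

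For existence I would insert the ansatz with $\alpha_i=\beta_i$ into (\ref{EquationsOfMotion Curved}). Because $q_i\odot q_j=\cos(\alpha_i-\alpha_j)$ is now constant, a direct expansion gives $q_j-(q_i\odot q_j)q_i=\sin(\alpha_i-\alpha_j)\,w_i$, where $w_i:=(r\sin\psi_i,-r\cos\psi_i,\rho\sin(\phi+\alpha_i),-\rho\cos(\phi+\alpha_i))^{T}$ satisfies $\|w_i\|=1$ and $w_i\odot q_i=0$; since $(1-(q_i\odot q_j)^{2})^{3/2}=|\sin(\alpha_i-\alpha_j)|^{3}$, the mutual-interaction term equals $A_i w_i$ with $A_i:=\sum_{j\neq i}m_j\big(\sin(\alpha_i-\alpha_j)\,|\sin(\alpha_i-\alpha_j)|\big)^{-1}$, while $-(\dot q_i\odot\dot q_i)q_i=-E q_i$ with $E:=\dot r^{2}+r^{2}\dot\theta^{2}+\dot\rho^{2}+\rho^{2}\dot\phi^{2}$ independent of $i$. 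Differentiating $q_i$ twice and matching the $(1,2)$- and $(3,4)$-blocks (each $\ddot q_i$ is a combination of $q_i$ and $w_i$ with $i$-independent coefficients), (\ref{EquationsOfMotion Curved}) becomes equivalent to $A_i$ being independent of $i$, say $A_i\equiv A$, together with
\[ \frac{d}{dt}\!\left(r^{2}\dot\theta\right)=-Ar^{2},\quad \frac{d}{dt}\!\left(\rho^{2}\dot\phi\right)=-A\rho^{2},\quad \frac{\ddot r-r\dot\theta^{2}}{r}=\frac{\ddot\rho-\rho\dot\phi^{2}}{\rho}=-E. \]
Taking $n$ odd, all masses equal, and the $\alpha_i$ equally spaced (a regular $n$-gon), there are no antipodal pairs and the distances $2-2\cos(\alpha_i-\alpha_j)$ are constant and positive, so the orbit meets no singularity of (\ref{EquationsOfMotion Curved}); pairing the index $\ell$ with $n-\ell$ shows $A\equiv0$, so $r^{2}\dot\theta=c_1$ and $\rho^{2}\dot\phi=c_2$ are constant. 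Writing $r=\cos\chi$, $\rho=\sin\chi$, the remaining equations collapse to the single conservative ODE $\ddot\chi=-U'(\chi)$ with $U(\chi)=\tfrac12 c_1^{2}\sec^{2}\chi+\tfrac12 c_2^{2}\csc^{2}\chi$. For $c_1,c_2\neq0$ this $U$ is a smooth potential well on $(0,\pi/2)$ blowing up at both endpoints, so any energy strictly above $\min U$ gives a periodic nonconstant $\chi(t)$ confined to a compact subinterval of $(0,\pi/2)$; recovering $\theta,\phi$ by quadrature from $\dot\theta=c_1/\cos^{2}\chi$, $\dot\phi=c_2/\sin^{2}\chi$ then yields a genuine positive elliptic-elliptic rotopulsator with $r_i=\cos\chi$, $\rho_i=\sin\chi$ nonconstant, $\alpha_i=\beta_i$, and all mutual distances constant.

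The structural half is largely bookkeeping, so the main obstacle is the existence half: one must check honestly that the four scalar components of (\ref{EquationsOfMotion Curved}) reduce to exactly the displayed radial system precisely when $A_i$ is $i$-independent, exhibit a configuration realizing this (the regular odd polygon with equal masses, via the $\ell\leftrightarrow n-\ell$ cancellation giving $A=0$), and then verify that the reduced $\chi$-dynamics really does admit bounded nonconstant solutions --- away both from the coordinate singularities $\chi\in\{0,\pi/2\}$ (where $r=0$ or $\rho=0$) and from the equilibrium $\chi\equiv\text{const}$, which is the excluded relative-equilibrium case. A secondary delicate point, already in the structural half, is that the distance identity only yields ``$\alpha_i-\beta_i$ constant or $\alpha_i+\beta_i$ constant'', so the reflection isometry of $\mathbb{S}^{3}$ is genuinely needed to land on $\alpha_i=\beta_i$.
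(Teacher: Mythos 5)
Your proposal is correct and follows the same skeleton as the paper --- constancy of $\|q_i-q_j\|$ together with nonconstancy of $r$ forces $\cos(\alpha_i-\alpha_j)=\cos(\beta_i-\beta_j)$, whence (up to symmetry) $\alpha_i=\beta_i$, a two-dimensional span, and hence a great circle; existence via the equal-mass regular polygon and a radial equation with an energy integral --- but several of your sub-arguments take a genuinely different and in places tighter route. To pass from the pairwise alternative ``$\alpha_i-\beta_i\equiv\alpha_j-\beta_j$ or $\alpha_i+\beta_i\equiv\alpha_j+\beta_j$'' to a global one, you use a purely combinatorial transitivity argument (two equivalence relations jointly covering all pairs, so one must be total) followed by the reflection $x_4\mapsto -x_4$; the paper instead rules out the mixed case dynamically, by showing it forces an antipodal pair and hence a singularity of (\ref{EquationsOfMotion Curved}), and then absorbs the ``all $\alpha_i+\beta_i$ constant'' branch into a bare ``we may assume $\alpha_i=\beta_i$'' --- your version makes explicit the reflection that this phrase conceals. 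For existence you decompose the equations of motion directly in the moving frame $\{q_i,w_i\}$ rather than invoking Criterion~\ref{Criterion}; the resulting conditions are the same as (\ref{Crit1C})--(\ref{Crit3C}) once one notes that Lemma~\ref{Lemma 1} forces $A_i=0$, which in your example follows anyway from the $\ell\leftrightarrow n-\ell$ pairing. Your reduction of the radial dynamics to $\ddot\chi=-U'(\chi)$ with a potential blowing up at both ends of $(0,\pi/2)$ justifies the existence of bounded nonconstant $r$ more rigorously than the paper's remark that the first-order equation ``has nonconstant solutions for suitable choices of $C$, $C_1$ and $C_2$''. The only caveat is your restriction to odd $n$ so that the regular polygon has no antipodal pair; the paper's regular-polygon example has the same limitation for even $n$ and defers to \cite{D2} for non-regular configurations, so this does not affect the existence claim as stated.
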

  \begin{theorem}\label{Main Theorem 1a}
     Let $i\in\{1,\ldots,n\}$. Let $q_{1},\ldots,q_{n}$ be a positive elliptic-elliptic rotopulsator solution of (\ref{EquationsOfMotion Curved}) with $r_{i}$ and $\rho_{i}$ independent of $i$ and not constant.
     If the $\alpha_{i}$ are all distinct and the $\beta_{i}$ are all distinct, then the configurations of the $(q_{i1},q_{i2})^{T}$ and of the $(q_{i3},q_{i4})^{T}$ are regular polygons with $n$ vertices and equal masses, or the $q_{i}$ represent vertices of a (not necessarily regular) polygon inscribed in a great circle.
  \end{theorem}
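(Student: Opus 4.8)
The plan is to substitute the rotopulsator ansatz into~(\ref{EquationsOfMotion Curved}) and, for each mass $i$, split the resulting equation into a radial and a tangential component in each of the planes $x_1x_2$ and $x_3x_4$. Writing $c_{ij}:=q_i\odot q_j=r^2\cos(\alpha_i-\alpha_j)+\rho^2\cos(\beta_i-\beta_j)$ and $d_{ij}:=(1-c_{ij}^2)^{3/2}$, the structural point is that, since $r^2+\rho^2=1$, each $c_{ij}$ is an affine function of $x:=r^2$, so along the orbit $c_{ij}$ is an affine function of the single non-constant quantity $r^2$; and since $r$ is continuous and non-constant, $r^2$ ranges over an interval $I$ of positive length. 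I would first harvest the easy consequences of the ansatz. Conservation of the angular momenta $\sum_k m_k(q_{k1}\dot q_{k2}-q_{k2}\dot q_{k1})$ and $\sum_k m_k(q_{k3}\dot q_{k4}-q_{k4}\dot q_{k3})$ shows that $r^2\dot\theta$ and $\rho^2\dot\phi$ are constant, so the two tangential equations reduce, for every $i$ and every $t$, to
\[ \sum_{j\neq i}\frac{m_j\sin(\alpha_i-\alpha_j)}{d_{ij}}=0,\qquad \sum_{j\neq i}\frac{m_j\sin(\beta_i-\beta_j)}{d_{ij}}=0, \]
while comparing the two radial equations shows that $\sum_{j\neq i}m_j\big(\cos(\alpha_i-\alpha_j)-\cos(\beta_i-\beta_j)\big)/d_{ij}$ is independent of $i$. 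Since all of these hold for every $t$, they are identities in $x\in I$.

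I would then split into two cases, according to whether $\cos(\alpha_i-\alpha_j)=\cos(\beta_i-\beta_j)$ for all $i,j$ or not. In the first case every $c_{ij}$, hence every Euclidean distance $\|q_i-q_j\|^2=2(1-c_{ij})$, is constant in $t$, so Theorem~\ref{Main Theorem 2} applies and gives $\alpha_i=\beta_i$ for all $i$ and the conclusion that the $q_i$ are the vertices of a (not necessarily regular) polygon inscribed in a great circle. (If one prefers to avoid invoking Theorem~\ref{Main Theorem 2}, one checks directly that $\cos(\alpha_i-\alpha_j)=\cos(\beta_i-\beta_j)$ for all $i,j$, together with the $\alpha_i$ being pairwise distinct, forces the configuration to lie at each instant in a fixed two-dimensional subspace of $\mathbb{R}^4$ spanned by an orthonormal pair assembled from the moving frame, i.e. on a great circle.)

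In the second case at least one $c_{ij}$, hence at least one $d_{ij}$, is genuinely non-constant on $I$. The crux is that the functions $x\mapsto(1-c_{ij}(x)^2)^{-3/2}$ attached to the pairs $(i,j)$ are linearly independent over $\mathbb{R}$ unless two of them are identically equal, which happens precisely when $(\cos(\alpha_i-\alpha_j),\cos(\beta_i-\beta_j))$ and $(\cos(\alpha_{i'}-\alpha_{j'}),\cos(\beta_{i'}-\beta_{j'}))$ agree up to sign. Feeding this into the three families of identities above makes each identity decouple into one equation per coincidence class of pairs, and, fixing $i$ and running through the classes, this should force $\{\alpha_j\}_{j=1}^n$ and $\{\beta_j\}_{j=1}^n$ each to be a complete set of equally spaced angles; since the $\alpha_i$ (resp. the $\beta_i$) are pairwise distinct, the $(q_{j1},q_{j2})^{T}$ and the $(q_{j3},q_{j4})^{T}$ are then regular $n$-gons. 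This part parallels the corresponding step in the proof of Theorem~\ref{Main Theorem 1}. Finally, re-inserting the regular-polygon structure into $\sum_{j\neq i}m_j\sin(\alpha_i-\alpha_j)/d_{ij}=0$ and using the resulting rotational symmetry, as in~\cite{T6}, forces all masses to be equal.

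The step I expect to be the main obstacle is the second case: proving the linear independence of the functions $x\mapsto(1-c_{ij}(x)^2)^{-3/2}$ when several of them may share singular points (where $|c_{ij}|\to 1$), and then carrying out the combinatorial bookkeeping that turns the decoupled identities into the precise statement that both angle sets are equally spaced. A secondary, largely routine, difficulty is dealing separately with the small values of $n$ and with borderline angles such as $\beta_i-\beta_j\equiv\pi$, where the sign ambiguities above degenerate.
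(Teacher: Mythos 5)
Your framework coincides with the paper's: the tangential/radial decomposition of the equations of motion is Criterion~\ref{Criterion}, the conservation of $r^{2}\dot{\theta}$ and $\rho^{2}\dot{\phi}$ is Lemma~\ref{Lemma 1}, and your observation that two functions $x\mapsto(1-c_{ij}(x)^{2})^{-3/2}$ coincide exactly when the cosine pairs agree up to a common sign is precisely Lemma~\ref{Lemma 2}. Your first case (all $\cos(\alpha_{i}-\alpha_{j})=\cos(\beta_{i}-\beta_{j})$, hence all mutual distances constant, hence a great circle via Theorem~\ref{Main Theorem 2}) is also how the paper concludes when its set $V$ is empty. The gap is that your second case, which carries the entire content of the theorem, is not actually proved: you write that the decoupled identities ``should force'' both angle sets to be equally spaced, and then you yourself flag exactly this step as the expected obstacle. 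In the paper this is a genuine case analysis: for a given pair $(\widehat{i},\widehat{j})$ with unequal cosines one enumerates the at most three possible cancellation partners --- one index $j_{1}$ with the same cosine pair and up to two indices $j_{2},j_{3}$ with the opposite-sign cosine pair --- uses the non-collision/non-antipodality conditions $\langle q_{a},q_{b}\rangle\neq -1$ to fix the signs of the corresponding sines, and extracts from (\ref{Crit1}) and (\ref{Crit2}) mass relations such as $0=m_{\widehat{j}}+m_{j_{2}}$ or $0=2m_{\widehat{j}}$ that are contradictory unless $j_{1}$ exists; applying this to consecutive $\alpha$'s yields equal spacing. None of this bookkeeping appears in your proposal, so the regular-polygon conclusion is asserted rather than derived.

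There is also a structural gap in the dichotomy itself. The negation of ``equal cosines for all pairs'' only tells you that the set $V$ of indices participating in at least one unequal-cosine pair is nonempty, not that $V=\{1,\dots,n\}$. Pairs with equal cosines contribute terms that are constant in $t$, which your linear-independence argument cannot separate from one another, so your Case~2 at best produces a regular polygon on $V$ and says nothing about the indices outside $V$. The paper needs a dedicated argument (the final part of Section~\ref{Section proof of main theorem 1a}: any $\widehat{j}\notin V$ would, via a permutation of the regular $|V|$-gon and $|V|\geq 3$, force two elements of $V$ to have equal cosines, a contradiction) to exclude $\emptyset\neq V\neq\{1,\dots,n\}$. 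Your proposal does not address this mixed case at all, and without it the claimed conclusion ``regular $n$-gon or great circle'' does not follow.
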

  \begin{theorem}\label{Main Theorem 3}
    There exist positive elliptic-elliptic rotopulsator solutions $q_{1}$, \ldots, $q_{n}$ of (\ref{EquationsOfMotion Curved}) for which $r_{i}$ and $\rho_{i}$ are independent of $i$, the $(q_{i1},q_{i2})^{T}$, $i\in\{1,\ldots,n\}$ and the $(q_{i3},q_{i4})^{T}$, $i\in\{1,\ldots,n\}$ represent vertices of a regular polygon and the masses are all equal. Such solutions exist both for the case that $r_{i}$ is constant and independent of $i$ and for the case that $r_{i}$ is nonconstant and independent of $i$.
  \end{theorem}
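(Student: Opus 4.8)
The plan is to construct such solutions explicitly from a fully symmetric ansatz and then read off the two families from a short qualitative analysis of one scalar ODE. Fix an odd $n\ge 3$, take all masses equal, $m_{1}=\dots=m_{n}=m$, and put $\alpha_{i}=\tfrac{2\pi(i-1)}{n}$ and $\beta_{i}=\tfrac{2\pi\ell(i-1)}{n}$ for a fixed integer $\ell$ (with $n$ odd no $q_{i}\odot q_{j}$ can equal $-1$; the choice $\ell=1$ gives a regular polygon inscribed in a great circle, while $\ell\not\equiv\pm1\pmod n$ gives a genuine Clifford-torus configuration with the masses \emph{not} on a great circle). In every case the $(q_{i1},q_{i2})^{T}$ and the $(q_{i3},q_{i4})^{T}$ are vertices of regular polygons by construction. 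Recalling that here $\sigma=1$, so $r^{2}+\rho^{2}=1$, we seek a rotopulsator with unknown profile functions $r(t)$, $\theta(t)$, $\phi(t)$, $\rho(t)=\sqrt{1-r(t)^{2}}$ and substitute it into (\ref{EquationsOfMotion Curved}).

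The first step is the symmetry reduction. Since $q_{i}\odot q_{j}$ depends only on $i-j\bmod n$ and the masses are equal, the cyclic symmetry $i\mapsto i+1$, realised on $\mathbb{R}^{4}$ by $\diag(R(2\pi/n),R(2\pi\ell/n))$, makes all $n$ equations in (\ref{EquationsOfMotion Curved}) equivalent to the one for $i=1$. Decomposing that equation into its $(q_{i1},q_{i2})$- and $(q_{i3},q_{i4})$-blocks and each of those into radial and tangential parts, the tangential parts give the two first integrals $r^{2}\dot\theta=a$, $\rho^{2}\dot\phi=b$ (the odd-in-$(i-j)$ gravitational contributions cancelling), while the radial gravitational contributions assemble into a single term $r(1-r^{2})A(r)$ with $A$ independent of $i$ and identically zero when $\ell=1$. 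Eliminating $\rho,\dot\rho$ through $\rho^{2}=1-r^{2}$ and inserting $\dot\theta=a/r^{2}$, $\dot\phi=b/\rho^{2}$, the two radial equations turn out to be consistent and collapse to the single autonomous ODE
\[
\ddot r=-\frac{r}{1-r^{2}}\,\dot r^{2}+g(r),\qquad g(r)=\frac{a^{2}}{r^{3}}-\frac{a^{2}}{r}-\frac{b^{2}r}{1-r^{2}}+r(1-r^{2})A(r).
\]

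The constant-size solutions correspond to roots of $g$: for $a,b\neq 0$ one has $g(r)\to+\infty$ as $r\to0^{+}$ and $g(r)\to-\infty$ as $r\to1^{-}$ (the terms $a^{2}/r^{3}$ and $-b^{2}r/(1-r^{2})$ dominating the bounded part $r(1-r^{2})A(r)$), so the intermediate value theorem yields $r_{0}\in(0,1)$ with $g(r_{0})=0$; taking $r\equiv r_{0}$, and hence $\theta,\phi$ linear in $t$, gives a solution with $r_{i}$ constant. For the nonconstant-size solutions, regarding $\dot r^{2}$ as a function of $r$ turns the displayed equation into a linear first-order ODE for $\dot r^{2}$ with integrating factor $(1-r^{2})^{-1}$, hence into a first integral $\dot r^{2}=(1-r^{2})\bigl(C-2V(r)\bigr)$ with $V'(r_{0})=0$. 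When $r_{0}$ is a nondegenerate minimum of $V$, i.e.\ $g'(r_{0})<0$ — which for $\ell=1$ one computes to equal $-4(a+b)^{2}<0$ for every $a,b>0$, and which for general $\ell$ holds once $a,b$ are large enough that the centrifugal terms dominate — the level sets of this first integral near but not at $(r_{0},0)$ are closed loops, so $r(t)$ is periodic, nonconstant and confined to a compact subinterval of $(0,1)$; integrating $a/r^{2}$ and $b/\rho^{2}$ recovers $\theta,\phi$ and produces a genuine positive elliptic-elliptic rotopulsator with $r_{i}$ nonconstant. In both families the masses are equal and both projections are regular polygons, as required.

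The main obstacle is exactly this qualitative input: confirming that the size-equilibrium $r_{0}$ lies in the \emph{open} interval $(0,1)$ and is a centre rather than a saddle, so that nonconstant solutions exist arbitrarily close to it and remain on nondegenerate Clifford tori, with $\rho>0$ and no collisions or antipodal pairs. Everything reduces to sign estimates for $g$ and for $g'(r_{0})$ in terms of $n$, $\ell$, $a$, $b$; the case $\ell=1$ is transparent because $A\equiv 0$, while the genuine Clifford-torus case with $\rho_{i}\neq r_{i}$ is obtained by restricting $a,b$ to a suitable range, after first checking that the chosen $n$, $\ell$ keep $q_{i}\odot q_{j}$ away from $-1$ for all $r\in(0,1)$.
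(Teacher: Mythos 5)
Your construction is correct, but it realises the theorem with a genuinely different configuration from the paper's. The paper builds a product (``grid'') configuration: $n=p_{1}p_{2}$ bodies whose distinct $\alpha$'s form one regular polygon and whose distinct $\beta$'s form another, every combination occurring once; you instead take a single cyclic orbit, $\alpha_{i}=\tfrac{2\pi(i-1)}{n}$, $\beta_{i}=\ell\alpha_{i}$, which for $\gcd(\ell,n)=1$ has all $\alpha_{i}$ and all $\beta_{i}$ distinct and is therefore a witness for the situation of Theorem~\ref{Main Theorem 1a} rather than Theorem~\ref{Main Theorem 1} --- still perfectly admissible for Theorem~\ref{Main Theorem 3} as stated, and a useful complement to the paper's examples. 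Both constructions dispose of (\ref{Crit1})--(\ref{Crit2}) by the same $k\mapsto n-k$ antisymmetry of the sine sums and reduce (\ref{Crit3}) to a scalar ODE for $r$ via the energy relation $\dot r^{2}/(1-r^{2})+C_{1}^{2}/r^{2}+C_{2}^{2}/\rho^{2}=U(r)+C$. Where you go beyond the paper is the ODE analysis: the paper merely asserts that constant solutions exist ``for suitable constants'' and that other initial values give nonconstant $r$, whereas you locate the size equilibrium $r_{0}\in(0,1)$ by the intermediate value theorem and check it is a nondegenerate centre (your computation $g'(r_{0})=-4(a+b)^{2}$ for $\ell=1$ is correct, and the large-$a,b$ domination argument for general $\ell$ works because $A$ is bounded on $[0,1]$ when no projected pair degenerates), which is what actually guarantees bounded, periodic, nonconstant $r$ confined to a compact subinterval of $(0,1)$. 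One small caveat: restricting to odd $n$ loses the even-$n$ instances of the theorem; these are recovered either by taking $\ell$ even (so that $q_{i}\odot q_{i+n/2}=\rho^{2}-r^{2}\neq-1$, at the price of a second polygon with repeated vertices, which the paper's Remark~1.1 permits) or by the paper's product configuration. With that remark added, your argument is a complete and in places more rigorous proof of the statement.
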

  \begin{remark}
     Note that by Theorems~\ref{Main Theorem 1}--\ref{Main Theorem 3} and Theorem~1.3 and Theorem~1.4 in \cite{T6}, we have proven that all rotopulsators for which $r_{i}$ and $\rho_{i}$ are not constant and independent of $i$ have configurations of point masses that are either regular polygons and have all masses equal, project onto regular polygons and have all masses equal, or lie on great circles.
  \end{remark}
  We will now first formulate a criterion and lemmas needed to prove Theorems~\ref{Main Theorem 1}--\ref{Main Theorem 3} in section~\ref{Section Background Theory}, after which we will prove Theorems~\ref{Main Theorem 1}, \ref{Main Theorem 2}, \ref{Main Theorem 1a} and \ref{Main Theorem 3} in sections~\ref{Section proof of main theorem 1}, \ref{Section proof of main theorem 2}, \ref{Section proof of main theorem 1a} and \ref{Section proof of main theorem 3} respectively.
  \section{Background theory}\label{Section Background Theory}
  In this section we will formulate the background theory we need to prove our main results and the main difficulties we will need to overcome to actually prove those results. We will use the notation introduced in the previous section. Additionally, we will adopt the notation $\langle\cdot,\cdot\rangle$ for the Euclidean inner product. The following lemma and criterion form the backbone of our proofs of our main results and were proven for general $r_{i}$ and $\rho_{i}$, $i\in\{1,\ldots,n\}$ in \cite{DK}:
  \begin{lemma}\label{Lemma 1}
    Let $q_{1},\textrm{ }\ldots,\textrm{ }q_{n}$ constitute a positive elliptic-elliptic rotopulsator with $r_{i}=r$ and $\rho_{i}=\rho$ independent of $i$. Then $2\dot{r}\dot{\theta}+r\ddot{\theta}=0$ and $2\dot{\rho}\dot{\phi}+\rho\ddot{\phi}=0$, i.e. there exist constants $C_{1}$, $C_{2}$ such that $r^{2}\dot{\theta}=C_{1}$ and $\rho^{2}\dot{\phi}=C_{2}$.
  \end{lemma}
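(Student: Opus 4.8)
The plan is to exploit the rotopulsator identities together with the equations of motion, projected onto the two invariant two-planes, to extract a conservation law for each ``angle'' variable $\theta$ and $\phi$. First I would substitute the ansatz $\binom{q_{i1}}{q_{i2}}=r\,R(\theta+\alpha_i)\binom{1}{0}$, $\binom{q_{i3}}{q_{i4}}=\rho\,R(\phi+\beta_i)\binom{1}{0}$ into \eqref{EquationsOfMotion Curved} and compute $\ddot q_i$ explicitly. Differentiating $R(\theta+\alpha_i)\binom{1}{0}$ twice produces terms in $\ddot\theta$, $\dot\theta^2$, $\dot r$, $\ddot r$ (and likewise with $\phi,\rho$); the key observation is that the \emph{tangential} component in the first plane, i.e. the component along $R(\theta+\alpha_i)\binom{0}{1}$, of $\ddot q_i$ equals $2\dot r\dot\theta + r\ddot\theta$, and similarly $2\dot\rho\dot\phi+\rho\ddot\phi$ in the second plane.

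The next step is to show that the corresponding tangential components of the right-hand side of \eqref{EquationsOfMotion Curved} vanish. The term $-\sigma(\dot q_i\odot\dot q_i)q_i$ is a scalar multiple of $q_i$, hence has no component along either tangential direction $R(\theta+\alpha_i)\binom{0}{1}$ or $R(\phi+\beta_i)\binom{0}{1}$. For the gravitational sum, note that $q_i\odot q_j$ depends on the angle differences only through $\cos(\theta+\alpha_i-\theta-\alpha_j)=\cos(\alpha_i-\alpha_j)$ and $\cos(\beta_i-\beta_j)$ — crucially \emph{not} on $\theta$ or $\phi$ — so $(\sigma-\sigma(q_i\odot q_j)^2)^{3/2}$ is a function of $r,\rho$ and constants only. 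The vector $q_j-\sigma(q_i\odot q_j)q_i$, projected onto the first plane, is a combination of $R(\theta+\alpha_j)\binom10$ and $R(\theta+\alpha_i)\binom10$; taking the inner product with $R(\theta+\alpha_i)\binom01$ and summing over $j$, the $\theta$-dependence factors out and one is left with something that, after using the full system, must cancel. The cleanest way to see the cancellation is to invoke the known conservation of angular momentum in each plane for solutions of \eqref{EquationsOfMotion Curved}, or equivalently to sum the tangential projections and recognize them as the total time derivative of a quantity that is forced to be zero; I would phrase this as: the $\binom{q_{i1}}{q_{i2}}$-plane angular momentum contribution from particle $i$ has time derivative $m_i(2\dot r\dot\theta + r\ddot\theta)r$ plus symmetric interaction terms that cancel pairwise.

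From $2\dot r\dot\theta + r\ddot\theta=0$, multiplying by $r$ gives $\frac{d}{dt}(r^2\dot\theta)=2r\dot r\dot\theta+r^2\ddot\theta=r(2\dot r\dot\theta+r\ddot\theta)=0$, so $r^2\dot\theta=C_1$ is constant; identically $\rho^2\dot\phi=C_2$. This establishes the ``equivalently'' clause and finishes the lemma.

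The main obstacle I anticipate is the pairwise cancellation of the interaction terms in the tangential projection: one must verify that $\sum_{i}\sum_{j\neq i} m_i m_j \langle q_j-\sigma(q_i\odot q_j)q_i,\ R(\theta+\alpha_i)\binom{0}{1}\rangle \big/ (\sigma-\sigma(q_i\odot q_j)^2)^{3/2}$ vanishes. This should follow because the summand is antisymmetric under $i\leftrightarrow j$ once one writes $\langle q_j, R(\theta+\alpha_i)\binom01\rangle = r^2\sin(\alpha_j-\alpha_i)$ and notes the denominator is symmetric in $i,j$ — so it is really a bookkeeping exercise in antisymmetrization rather than a deep difficulty, but it is the one place where care is needed. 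Everything else is a direct, short computation, consistent with the remark in the excerpt that the original proof in \cite{DK} is very brief.
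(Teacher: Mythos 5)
Your proof is correct and follows essentially the same route as the paper: both rest on the conservation of the $e_{1}\wedge e_{2}$ and $e_{3}\wedge e_{4}$ components of the total angular momentum, whose time derivative per particle is $m_{j}r(2\dot{r}\dot{\theta}+r\ddot{\theta})$ (resp.\ $m_{j}\rho(2\dot{\rho}\dot{\phi}+\rho\ddot{\phi})$), followed by the observation that $\tfrac{d}{dt}(r^{2}\dot{\theta})=r(2\dot{r}\dot{\theta}+r\ddot{\theta})$. The only difference is that you rederive that conserved quantity via the mass-weighted sum and the antisymmetry of $m_{i}m_{j}\sin(\alpha_{j}-\alpha_{i})$ over the symmetric denominator, whereas the paper simply imports it from \cite{D3}.
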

  \begin{remark}
    Lemma~\ref{Lemma 1} was proven for general $r_{i}$ and $\rho_{i}$ in \cite{DK} (see Criterion~2, equation (34)). For the case that $r_{i}$ and $\rho_{i}$ are independent of $i$, the result immediately gives Lemma~\ref{Lemma 1}.
  \end{remark}
%
  \begin{criterion}\label{Criterion}
    Let $q_{1},\textrm{ }\ldots,\textrm{ }q_{n}$ be a positive elliptic-elliptic rotopulsator with $r_{i}=r$ and $\rho_{i}=\rho$ independent of $i$. Then
    \begin{align}
      0 &=\sum\limits_{\substack{j=1\\j\neq i}}^{n}\frac{m_{j}\sin{(\alpha_{j}-\alpha_{i})}}{\left(1-\left(\cos{(\beta_{j}-\beta_{i})}+r^{2}\left(\cos{(\alpha_{j}-\alpha_{i})}-\cos{(\beta_{j}-\beta_{i})}\right)\right)^{2}\right)^{\frac{3}{2}}}, \label{Crit1}\\
      0 &=\sum\limits_{\substack{j=1\\j\neq i}}^{n}\frac{m_{j}\sin{(\beta_{j}-\beta_{i})}}{\left(1-\left(\cos{(\beta_{j}-\beta_{i})}+r^{2}\left(\cos{(\alpha_{j}-\alpha_{i})}-\cos{(\beta_{j}-\beta_{i})}\right)\right)^{2}\right)^{\frac{3}{2}}}\quad\textrm{and}\label{Crit2}\\
      \delta &=r\rho^{2}\sum\limits_{\substack{j=1\\j\neq i}}^{n}\frac{m_{j}(\cos{(\alpha_{j}-\alpha_{i})}-\cos{(\beta_{j}-\beta_{i})})}{\left(1-\left(\cos{(\beta_{j}-\beta_{i})}+r^{2}\left(\cos{(\alpha_{j}-\alpha_{i})}-\cos{(\beta_{j}-\beta_{i})}\right)\right)^{2}\right)^{\frac{3}{2}}},\label{Crit3}
    \end{align}
    where $\delta=\ddot{r}+r\rho^{2}((\dot{\phi})^{2}-(\dot{\theta})^{2})+r\left(\frac{\dot{r}}{\rho}\right)^{2}$.
  \end{criterion}
  \begin{remark}
    Criterion~\ref{Criterion} was proven for general $r_{i}$, $\rho_{i}$ in \cite{DK} (see Criterion~2). For the case that $r_{i}$ and $\rho_{i}$ are independent of $i$, the Criterion~2 in \cite{DK} immediately gives (\ref{Crit1})--(\ref{Crit3}). The idea behind the proof comes down to inserting our expressions for positive elliptic-elliptic rotopulsators (see \ref{Rotopulsator identities}) into (\ref{EquationsOfMotion Curved}), multiplying both sides of the resulting equations for the first and second coordinates of the $q_{i}$ from the left with $R(\theta+\alpha_{i})^{-1}$ and the resulting equations for the third and fourth coordinates of the $q_{i}$ from the left with $R(\phi+\beta_{i})^{-1}$ then gives the right-hand sides of (\ref{Crit1})--(\ref{Crit3}), whereas using Lemma~\ref{Lemma 1} then gives the left-hand sides of (\ref{Crit1}) and  (\ref{Crit2}).
  \end{remark}
  With Criterion~\ref{Criterion} in place, we can now formulate the basic idea behind the proofs of our main results: To prove Theorems~\ref{Main Theorem 1} and \ref{Main Theorem 1a}, 
  ideally we would like to use that the terms on the righthand sides of the identities of Criterion~\ref{Criterion} are linearly independent given certain conditions on the $\alpha_{i}$ and $\beta_{i}$, $i\in\{1,\ldots,n\}$, if $r_{i}=r$ and $\rho_{i}=\rho$ are independent of $i$ and not constant. However, for such a linear independence argument to work, we need to address the possibility of the  $\cos{(\alpha_{j}-\alpha_{i})}-\cos{(\beta_{j}-\beta_{i})}$ being zero for certain $i$, $j$, in which case our calculations may become much more complex. To this end, we will formulate the next lemma, which states the exact  conditions for the terms of the sums on the right-hand side of (\ref{Crit1})--(\ref{Crit3}) to be linearly independent and a second lemma, which deals exclusively with the possibility of $\beta_{j}-\beta_{i}=0\textrm{ }(mod\textrm{ }2\pi)$ for certain $i$, $j$:
  \begin{lemma}\label{Lemma 2}
    Let $q_{1},\textrm{ }\ldots,\textrm{ }q_{n}$ be a positive elliptic-elliptic rotopulsator solution of (\ref{EquationsOfMotion Curved}) with, $r_{i}=r$ and $\rho_{i}=\rho$ independent of $i$ and not constant. Let $A_{j_{1}i_{1}}$ and $A_{j_{2}i_{2}}$ be nonzero constants, $i_{1}$, $i_{2}$, $j_{1}$, $j_{2}\in\{1,\ldots,n\}$, $j_{1}\neq i_{1}$ and $j_{2}\neq i_{2}$. Then for terms
    \begin{align}\label{TermA1}
      \frac{A_{j_{1}i_{1}}}{\left(1-\left(\cos{(\beta_{j_{1}}-\beta_{i_{1}})}+r^{2}\left(\cos{(\alpha_{j_{1}}-\alpha_{i_{1}})}-\cos{(\beta_{j_{1}}-\beta_{i_{1}})}\right)\right)^{2}\right)^{\frac{3}{2}}},
    \end{align}
    $\cos{(\alpha_{j_{1}}-\alpha_{i_{1}})}-\cos{(\beta_{j_{1}}-\beta_{i_{1}})}\neq 0$, to cancel out against terms
    \begin{align}
      \frac{A_{j_{2}i_{2}}}{\left(1-\left(\cos{(\beta_{j_{2}}-\beta_{i_{2}})}+r^{2}\left(\cos{(\alpha_{j_{2}}-\alpha_{i_{2}})}-\cos{(\beta_{j_{2}}-\beta_{i_{2}})}\right)\right)^{2}\right)^{\frac{3}{2}}}\label{TermA2}
    \end{align}
    in the sums in the right-hand sides of (\ref{Crit1})--(\ref{Crit3}), we need that either  \begin{align*}\cos{(\alpha_{j_{1}}-\alpha_{i_{1}})}=\cos{(\alpha_{j_{2}}-\alpha_{i_{2}})}\quad\textrm{and}\quad\cos{(\beta_{j_{1}}-\beta_{i_{1}})}=\cos{(\beta_{j_{2}}-\beta_{i_{2}})}, \end{align*}
    or
    \begin{align*}\cos{(\alpha_{j_{1}}-\alpha_{i_{1}})}=-\cos{(\alpha_{j_{2}}-\alpha_{i_{2}})}\quad\textrm{and}\quad\cos{(\beta_{j_{1}}-\beta_{i_{1}})}=-\cos{(\beta_{j_{2}}-\beta_{i_{2}})} \end{align*}
    \end{lemma}
  \begin{proof}
    If $\cos{(\alpha_{j_{1}}-\alpha_{i_{1}})}\neq\cos{(\beta_{j_{1}}-\beta_{i_{1}})}$, then terms (\ref{TermA1})
    are linearly independent for different values of $i_{1}$, $j_{1}$ if and only if the roots of the polynomials $1-(\cos{(\beta_{j_{1}}-\beta_{i})}+x(\cos{(\alpha_{j_{1}}-\alpha_{i})}-\cos{(\beta_{j_{1}}-\beta_{i})}))^{2}$ differ for different values of $j_{1}$. Because the roots of \begin{align*}(1-(\cos{(\beta_{j_{1}}-\beta_{i_{1}})}+x(\cos{(\alpha_{j_{1}}-\alpha_{i_{1}})}-\cos{(\beta_{j_{1}}-\beta_{i_{1}})}))^{2})\end{align*} are \begin{align*}\frac{1-\cos{(\beta_{j_{1}}-\beta_{i_{1}})}}{\cos{(\alpha_{j_{1}}-\alpha_{i_{1}})}-\cos{(\beta_{j_{1}}-\beta_{i_{1}})}}\quad\textrm{and}\quad -\frac{1+\cos{(\beta_{j_{1}}-\beta_{i_{1}})}}{\cos{(\alpha_{j_{1}}-\alpha_{i_{1}})}-\cos{(\beta_{j_{1}}-\beta_{i_{1}})}}\end{align*} if $\cos{(\alpha_{j_{1}}-\alpha_{i_{1}})}\neq\cos{(\beta_{j_{1}}-\beta_{i_{1}})}$, we have that terms (\ref{TermA1})
    can cancel out against terms
    (\ref{TermA2})
    if and only if
    \begin{align*}\frac{1-\cos{(\beta_{j_{1}}-\beta_{i_{1}})}}{\cos{(\alpha_{j_{1}}-\alpha_{i_{1}})}-\cos{(\beta_{j_{1}}-\beta_{i_{1}})}}=\frac{1-\cos{(\beta_{j_{2}}-\beta_{i_{2}})}}{\cos{(\alpha_{j_{2}}-\alpha_{i_{2}})}-\cos{(\beta_{j_{2}}-\beta_{i_{2}})}}\end{align*} and \begin{align*}-\frac{1+\cos{(\beta_{j_{1}}-\beta_{i_{1}})}}{\cos{(\alpha_{j_{1}}-\alpha_{i_{1}})}-\cos{(\beta_{j_{1}}-\beta_{i_{1}})}}=-\frac{1+\cos{(\beta_{j_{2}}-\beta_{i_{2}})}}{\cos{(\alpha_{j_{2}}-\alpha_{i_{2}})}-\cos{(\beta_{j_{2}}-\beta_{i_{2}})}},\end{align*}
    or
     \begin{align*}\frac{1-\cos{(\beta_{j_{1}}-\beta_{i_{1}})}}{\cos{(\alpha_{j_{1}}-\alpha_{i_{1}})}-\cos{(\beta_{j_{1}}-\beta_{i_{1}})}}=-\frac{1-\cos{(\beta_{j_{2}}-\beta_{i_{2}})}}{\cos{(\alpha_{j_{2}}-\alpha_{i_{2}})}-\cos{(\beta_{j_{2}}-\beta_{i_{2}})}}\end{align*} and \begin{align*}\frac{1+\cos{(\beta_{j_{1}}-\beta_{i_{1}})}}{\cos{(\alpha_{j_{1}}-\alpha_{i_{1}})}-\cos{(\beta_{j_{1}}-\beta_{i_{1}})}}=-\frac{1+\cos{(\beta_{j_{2}}-\beta_{i_{2}})}}{\cos{(\alpha_{j_{2}}-\alpha_{i_{2}})}-\cos{(\beta_{j_{2}}-\beta_{i_{2}})}}.\end{align*}
    The first of these possibilities is equivalent to \begin{align*}\cos{(\beta_{j_{1}}-\beta_{i_{1}})}=\cos{(\beta_{j_{2}}-\beta_{i_{2}})}\quad\textrm{and}\quad
      \cos{(\alpha_{j_{1}}-\alpha_{i})}=\cos{(\alpha_{j_{2}}-\alpha_{i})},
    \end{align*}
    while the second of these possibilities is equivalent to
    \begin{align*}\cos{(\beta_{j_{1}}-\beta_{i_{1}})}=-\cos{(\beta_{j_{2}}-\beta_{i_{2}})}\quad\textrm{and}\quad
      \cos{(\alpha_{j_{1}}-\alpha_{i})}=-\cos{(\alpha_{j_{2}}-\alpha_{i})}.
    \end{align*}
    This completes the proof.
  \end{proof}
  \begin{lemma}\label{Lemma 3}
    Let $q_{1},\textrm{ }\ldots,\textrm{ }q_{n}$ be a positive elliptic-elliptic rotopulsator for which $r_{i}$ and $\rho_{i}$ are independent of $i$ and not constant. Then for any $q_{i}$, $q_{j}$ for which $\beta_{i}=\beta_{j}\pm\pi$ we have that there are no $k\in\{1,\ldots,n\}$ for which $\beta_{k}=\beta_{i}$ and $\cos{(\alpha_{i}-\alpha_{k})}=-\cos{(\alpha_{j}-\alpha_{k})}$.
  \end{lemma}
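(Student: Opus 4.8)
The plan is to argue by contradiction. Suppose there are indices $i$, $j$, $k$ with $\beta_i=\beta_j\pm\pi$, $\beta_k=\beta_i$ and $\cos(\alpha_i-\alpha_k)=-\cos(\alpha_j-\alpha_k)$; I would derive a contradiction from Criterion~\ref{Criterion}. First I would clear away degenerate cases. For any solution of (\ref{EquationsOfMotion Curved}) one has $q_a\odot q_b\in(-1,1)$ whenever $a\neq b$, since the right-hand side of (\ref{EquationsOfMotion Curved}) is undefined otherwise, and for a rotopulsator $q_a\odot q_b=\cos(\beta_a-\beta_b)+r^2\big(\cos(\alpha_a-\alpha_b)-\cos(\beta_a-\beta_b)\big)$. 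Hence $k\neq j$ (else $\beta_j\equiv\beta_i$, contradicting $\beta_i=\beta_j\pm\pi$), and none of $k=i$, $\cos(\alpha_i-\alpha_k)=1$, $\cos(\alpha_i-\alpha_k)=-1$ can hold, since each forces $q_a\odot q_b=-1$ for some pair among $\{i,j,k\}$; thus $c:=\cos(\alpha_i-\alpha_k)\in(-1,1)$. Rewriting $\cos(\alpha_i-\alpha_k)=-\cos(\alpha_j-\alpha_k)$ as $(\alpha_i-\alpha_k)+(\alpha_j-\alpha_k)\equiv\pi$ or $(\alpha_i-\alpha_k)-(\alpha_j-\alpha_k)\equiv\pi\pmod{2\pi}$, the second case together with $\beta_i-\beta_j\equiv\pm\pi$ gives $q_i\odot q_j=-r^2-\rho^2=-1$, which is impossible; so $\alpha_i+\alpha_j\equiv\pi+2\alpha_k\pmod{2\pi}$, whence $\sin(\alpha_j-\alpha_k)=\sin(\alpha_i-\alpha_k)=:s$ and $s^2=1-c^2>0$.

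Second, I would note that in Criterion~\ref{Criterion} applied at index $k$ the $i$-th and $j$-th summands share a denominator: since $\beta_k=\beta_i$, $q_i\odot q_k=1-r^2(1-c)=:w$; since $\beta_j-\beta_k\equiv\pm\pi$ and $\cos(\alpha_j-\alpha_k)=-c$, $q_j\odot q_k=-1+r^2(1-c)=-w$; hence $1-(q_i\odot q_k)^2=1-(q_j\odot q_k)^2=1-w^2$, a nonconstant quadratic in $x:=r^2$ because $1-c\neq0$.

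Third, I would apply (\ref{Crit1}) at index $k$. Since $r$ is nonconstant and real-analytic, $x=r^2$ ranges over an interval of positive length, and the right-hand side of (\ref{Crit1}) depends on $t$ only through $x$; hence $\sum_{l\neq k}m_l\sin(\alpha_l-\alpha_k)\big(1-(q_l\odot q_k)^2\big)^{-3/2}\equiv 0$ identically in $x$. Collecting the summands by the distinct polynomials $P_{lk}(x):=1-(q_l\odot q_k)^2$ they involve (those $l$ with $\cos(\alpha_l-\alpha_k)=\cos(\beta_l-\beta_k)$ give summands constant in $x$ and are irrelevant here), and using Lemma~\ref{Lemma 2}, whose proof amounts precisely to the statement that summands with distinct such polynomials are linearly independent, I obtain that the total coefficient of the nonconstant function $(1-w^2)^{-3/2}$ vanishes, i.e. $\sum_{l\in H}m_l\sin(\alpha_l-\alpha_k)=0$, where $H:=\{\,l\neq k:\; P_{lk}=1-w^2\,\}$.

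The remaining, and main, step is to show $H=\{i,j\}$: then $(m_i+m_j)s=0$, contradicting $m_i,m_j>0$ and $s\neq0$. If $P_{lk}=1-w^2$, then $P_{lk}$ vanishes at $x=0$, so $\cos(\beta_l-\beta_k)=\pm1$, i.e. $\beta_l\equiv\beta_k$ or $\beta_l\equiv\beta_k\pm\pi$; and since $q_l\odot q_k$ is affine in $x$ with $(q_l\odot q_k)^2=w^2$, comparing values at $x=0$ forces $q_l\odot q_k=w$ in the first case and $q_l\odot q_k=-w$ in the second, hence $\cos(\alpha_l-\alpha_k)=c$, respectively $\cos(\alpha_l-\alpha_k)=-c$. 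In the first case $\alpha_l-\alpha_k\equiv\pm(\alpha_i-\alpha_k)$; if $\alpha_l\equiv\alpha_i$ then $q_l$ and $q_i$ coincide, forcing $l=i$, while if $\alpha_l\equiv2\alpha_k-\alpha_i$ then $\alpha_l-\alpha_j\equiv2\alpha_k-(\alpha_i+\alpha_j)\equiv-\pi$ and $\beta_l-\beta_j\equiv\beta_i-\beta_j\equiv\pm\pi$, so $q_l\odot q_j=-r^2-\rho^2=-1$, impossible. The second case is symmetric: either $l=j$, or else $q_l\odot q_i=-r^2-\rho^2=-1$, impossible. Thus $H=\{i,j\}$, completing the proof. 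I expect this identification of $H$ to be the crux: it is the only place where the hypothesis $\beta_i=\beta_j\pm\pi$ is genuinely needed, via the observation that any additional body making a denominator coincide with the $i$-th (resp. $j$-th) one would be forced to be antipodal to $q_j$ (resp. $q_i$), which no solution of (\ref{EquationsOfMotion Curved}) can accommodate.
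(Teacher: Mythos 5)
Your proof is correct and follows essentially the same route as the paper's: argue by contradiction, apply (\ref{Crit1}) at the index $k$, observe that the $i$-th and $j$-th summands share the denominator $(1-w^2)^{3/2}$, invoke Lemma~\ref{Lemma 2} to isolate the coefficient of that nonconstant term, and exclude any further contributor by showing it would be antipodal to $q_j$ or $q_i$, leaving $(m_i+m_j)\sin(\alpha_i-\alpha_k)=0$, a contradiction. Your determination of the full set $H$ is somewhat more systematic than the paper's, which only explicitly treats the potential extra body $\widehat{j}$ with $\beta_{\widehat{j}}=\beta_i$, but the substance of the argument is the same.
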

  \begin{proof}
    If there are $k\in\{1,\ldots,n\}$ for which $\beta_{k}=\beta_{i}$ and \begin{align*}\cos{(\alpha_{i}-\alpha_{k})}=-\cos{(\alpha_{j}-\alpha_{k})}, \end{align*} then $\langle q_{i},q_{j}\rangle=-1$ if and only if $\sin{(\alpha_{i}-\alpha_{k})}=-\sin{(\alpha_{j}-\alpha_{k})}$, so because if $\langle q_{i},q_{j}\rangle=-1$ for a certain $i$, $j\in\{1,\ldots,n\}$, the right-hand side of (\ref{EquationsOfMotion Curved}) is undefined, we have that $\sin{(\alpha_{i}-\alpha_{k})}=\sin{(\alpha_{j}-\alpha_{k})}\neq 0$. Additionally, there might be a $\widehat{j}\in\{1,\ldots,n\}$ such that $\cos{(\alpha_{i}-\alpha_{k})}=\cos{(\alpha_{\widehat{j}}-\alpha_{k})}$ such that $\beta_{\widehat{j}}=\beta_{i}$ and $\alpha_{\widehat{j}}\neq\alpha_{i}$. If such a $\widehat{j}$ does not exist, then by (\ref{Crit1}) and Lemma~\ref{Lemma 2} we have that
    \begin{align*}
      0 &=\frac{m_{i}\sin{(\alpha_{i}-\alpha_{k})}}{(1-(1+r^{2}(\cos{(\alpha_{i}-\alpha_{k})}-1))^{2})^{\frac{3}{2}}}+\frac{m_{j}\sin{(\alpha_{j}-\alpha_{k})}}{(1-(-1+r^{2}(\cos{(\alpha_{j}-\alpha_{k})}+1))^{2})^{\frac{3}{2}}},
    \end{align*}
    which because $\sin{(\alpha_{i}-\alpha_{k})}=\sin{(\alpha_{j}-\alpha_{k})}\neq 0$ means that \begin{align*}0=m_{i}+m_{j}>0, \end{align*} which is a contradiction.

    If such a $\widehat{j}$ does exist, then because $\cos{(\alpha_{i}-\alpha_{k})}=\cos{(\alpha_{\widehat{j}}-\alpha_{k})}$ and $\widehat{j}\neq i$, we have that
    \begin{align*}
      \alpha_{\widehat{j}}-\alpha_{k}=2\pi-(\alpha_{i}-\alpha_{k}),
    \end{align*}
    which means that \begin{align*}\sin{(\alpha_{\widehat{j}}-\alpha_{k})}=-\sin{(\alpha_{i}-\alpha_{k})}. \end{align*} But as by construction we now have that \begin{align*}\cos{(\alpha_{\widehat{j}}-\alpha_{k})}=-\cos{(\alpha_{j}-\alpha_{k})}\textrm{,}\quad\cos{(\beta_{\widehat{j}}-\beta_{k})}=-\cos{(\beta_{j}-\beta_{k})}\end{align*} and \begin{align*}\sin{(\beta_{\widehat{j}}-\beta_{k})}=0=-\sin{(\beta_{j}-\beta_{k})}, \end{align*} to avoid the possibility of $\langle q_{j},q_{\widehat{j}}\rangle=-1$, we need that \begin{align*}\sin{(\alpha_{\widehat{j}}-\alpha_{k})}=\sin{(\alpha_{j}-\alpha_{k})}\neq 0. \end{align*} But because we also have that \begin{align*}\sin{(\alpha_{i}-\alpha_{k})}=\sin{(\alpha_{j}-\alpha_{k})}\quad\textrm{and}\quad\sin{(\alpha_{i}-\alpha_{k})}=-\sin{(\alpha_{\widehat{j}}-\alpha_{k})}, \end{align*} we get a contradiction.

    This means that our assumption that there are $k\in\{1,\ldots,n\}$ for which $\beta_{k}=\beta_{i}$ and $\cos{(\alpha_{i}-\alpha_{k})}=-\cos{(\alpha_{j}-\alpha_{k})}$ is false, which proves that there are no $k\in\{1,\ldots,n\}$ for which $\beta_{k}=\beta_{i}$ and $\cos{(\alpha_{i}-\alpha_{k})}=-\cos{(\alpha_{j}-\alpha_{k})}$.
  \end{proof}
  \section{Proof of Theorem~\ref{Main Theorem 1}}\label{Section proof of main theorem 1}
    We will assume that there are $i$, $j\in\{1,\ldots,n\}$ such that $\alpha_{j}-\alpha_{i}\neq 0$. If this is not the case, then we switch the roles of the $\alpha$s and the $\beta$s. Additionally, for $j$, $k\in\{1,...,n\}$, let
    \begin{align*}
      C_{\alpha_{j}\alpha_{k}\beta_{j}\beta_{k}}=\frac{\cos{(\alpha_{j}-\alpha_{k})}-\cos{(\beta_{j}-\beta_{k})}}{\left(1-\left(\cos{(\beta_{j}-\beta_{k})}+r^{2}\left(\cos{(\alpha_{j}-\alpha_{k})}-\cos{(\beta_{j}-\beta_{k})}\right)\right)^{2}\right)^{\frac{3}{2}}}
    \end{align*}
    and
    \begin{align*}
      S_{\alpha_{j}\alpha_{k}\beta_{j}\beta_{k}}=\frac{\sin{(\alpha_{j}-\alpha_{k})}}{\left(1-\left(\cos{(\beta_{j}-\beta_{k})}+r^{2}\left(\cos{(\alpha_{j}-\alpha_{k})}-\cos{(\beta_{j}-\beta_{k})}\right)\right)^{2}\right)^{\frac{3}{2}}}.
    \end{align*}
    Let $i\in\{1,\ldots,n\}$ and define $V_{i}=\{j\in\{1,\ldots,n\}\mid\beta_{j}=\beta_{i}\}$. For any $i\in\{1,\ldots,n\}$, $k$, $j\in V_{i}$, we now have that $\cos{(\beta_{j}-\beta_{k})}=1$ and by (\ref{Crit3}) that
    \begin{align}
      \delta &=r\rho^{2}\sum\limits_{j\notin V_{i}}m_{j}C_{\alpha_{j}\alpha_{k}\beta_{j}\beta_{k}}-r\rho^{2}\sum\limits_{j\in V_{i},\textrm{ }j\neq k}m_{j}C_{\alpha_{j}\alpha_{k}\beta_{i}\beta_{i}}.\label{COSV1}
    \end{align}
    By the same argument, we find by (\ref{Crit1}) that
    \begin{align}
      0 &=\sum\limits_{j\notin V_{i}}m_{j}S_{\alpha_{j}\alpha_{k}\beta_{j}\beta_{k}}+\sum\limits_{j\in V_{i},\textrm{ }j\neq k}m_{j}S_{\alpha_{j}\alpha_{k}\beta_{i}\beta_{i}}.\label{SINV1}
    \end{align}
    Now assume that the vectors $(q_{j1},q_{j2})^{T}$, $j\in V_{i}$, represent the vertices of an irregular polygon. Label the $\alpha_{j}$, $j\in V_{i}$, as $\alpha_{i_{1}}$, \ldots, $\alpha_{i_{u}}$ with $u=|V_{i}|$, $\alpha_{i_{u+1}}:=2\pi+\alpha_{i_{1}}$, $\alpha_{i_{s}}<\alpha_{i_{s+1}}$ and $\alpha_{i_{2}}-\alpha_{i_{1}}\leq\alpha_{i_{s+1}}-\alpha_{i_{s}}$ for all $s\in\{1,\ldots,u\}$. Subtracting (\ref{COSV1}) for $k=i_{1}$ from (\ref{COSV1}) for $k=i_{2}$, we get by Lemmas~\ref{Lemma 2} and \ref{Lemma 3} that
    \begin{align}
      0 &=-\sum\limits_{j\in V_{i},\textrm{ }j\neq i_{2}}m_{j}C_{\alpha_{j}\alpha_{i_{2}}\beta_{i}\beta_{i}}+\sum\limits_{j\in V_{i},\textrm{ }j\neq i_{1}}m_{j}C_{\alpha_{j}\alpha_{i_{1}}\beta_{i}\beta_{i}}\label{COSV2}
    \end{align}
    by Lemma~\ref{Lemma 2}, Lemma~\ref{Lemma 3} and (\ref{SINV1}), taking $k=i_{1}$ we get
    \begin{align}
      0 &=\sum\limits_{j\in V_{i},\textrm{ }j\neq i_{1}}m_{j}S_{\alpha_{j}\alpha_{i_{1}}\beta_{i}\beta_{i}}\label{SINV2 1}
    \end{align}
    and by Lemma~\ref{Lemma 2}, Lemma~\ref{Lemma 3} and (\ref{SINV1}), taking $k=i_{2}$ we get
    \begin{align}
      0 &=\sum\limits_{j\in V_{i},\textrm{ }j\neq i_{2}}m_{j}S_{\alpha_{j}\alpha_{i_{2}}\beta_{i}\beta_{i}}.\label{SINV2 2}
    \end{align}
    Note that there has to be a $v\in\{1,\ldots,u\}$ such that $\alpha_{i_{2}}-\alpha_{i_{1}}<\alpha_{i_{v+1}}-\alpha_{i_{v}}$, as otherwise $\alpha_{i_{s+1}}-\alpha_{i_{s}}$ has the same value for all $s\in\{1,\ldots,u\}$, in which case the vertices $(q_{j1},q_{j2})^{T}$, $j\in V_{i}$ represent the vertices of a regular polygon, which contradicts our assumption that they represent the vertices of an irregular polygon. Suppose for this $v$ that we have a $w$ such that \begin{align*}\cos{(\alpha_{i_{w}}-\alpha_{i_{2}})}=\cos{(\alpha_{i_{v}}-\alpha_{i_{1}})}.\end{align*} Then
    \begin{align*}
      \alpha_{i_{w}}-\alpha_{i_{2}}=\alpha_{i_{v}}-\alpha_{i_{1}},\textrm{ or }\alpha_{i_{w}}-\alpha_{i_{2}}=2\pi-(\alpha_{i_{v}}-\alpha_{i_{1}}).
    \end{align*}
    If the first of these two identities is true, then
    \begin{align*}
      \alpha_{i_{w}}-\alpha_{i_{v}}=\alpha_{i_{2}}-\alpha_{i_{1}}>0,
    \end{align*}
    so $w>v$ and therefore $w\geq v+1$, which by construction means that
    \begin{align*}
      \alpha_{i_{2}}-\alpha_{i_{1}}<\alpha_{i_{v+1}}-\alpha_{i_{v}}\leq \alpha_{i_{w}}-\alpha_{i_{v}}=\alpha_{i_{2}}-\alpha_{i_{1}},
    \end{align*}
    which is a contradiction.
    Our proof now comes down to checking all possible ways a term $m_{j}C_{\alpha_{v}\alpha_{i_{1}}\beta_{i}\beta_{i}}$
    can cancel out against other terms in (\ref{COSV2}) under the conditions imposed by (\ref{SINV2 1}) and (\ref{SINV2 2}). \\
    So by Lemmas~\ref{Lemma 2} and \ref{Lemma 3} the only terms against which $C_{i_{1}i_{v}}$ can cancel out in (\ref{COSV2}) are terms $C_{\alpha_{i_{1}}\alpha_{i_{a}}\beta_{i}\beta_{i}}$ for which \begin{align*}\cos{(\alpha_{i_{v}}-\alpha_{i_{1}})}=\cos{(\alpha_{i_{a}}-\alpha_{i_{1}})}\end{align*} and $C_{i_{2}i_{p}}$ for which \begin{align*}\cos{(\alpha_{i_{v}}-\alpha_{i_{1}})}=\cos{(\alpha_{i_{p}}-\alpha_{i_{2}})}, \end{align*} where $a$, $p$, $w\in\{1,\ldots,u\}$. \\
    Note that $\cos{(\alpha_{i_{v}}-\alpha_{i_{1}})}=\cos{(\alpha_{i_{a}}-\alpha_{i_{1}})}$ if and only if $\alpha_{i_{a}}-\alpha_{i_{1}}=\alpha_{i_{v}}-\alpha_{i_{1}}$, or $\alpha_{i_{a}}-\alpha_{i_{1}}=2\pi-(\alpha_{i_{v}}-\alpha_{i_{1}})$, which means that $\alpha_{i_{a}}-\alpha_{i_{1}}=2\pi-(\alpha_{i_{v}}-\alpha_{i_{1}})$, as otherwise $i_{a}=i_{v}$.
    Additionally, note by the same argument that if $C_{\alpha_{i_{1}}\alpha_{i_{a}}\beta_{i}\beta_{i}}$ exists, that $\sin{(\alpha_{i_{v}}-\alpha_{i_{1}})}\neq 0$, as otherwise $\langle q_{i_{v}}, q_{i_{a}}\rangle^{2}=1$, which makes the right-hand side of (\ref{EquationsOfMotion Curved}) undefined. This means by (\ref{SINV2 1}), Lemma~\ref{Lemma 2} and Lemma~\ref{Lemma 3} that if $C_{\alpha_{i_{1}}\alpha_{i_{a}}\beta_{i}\beta_{i}}$ exists, we have that $0=m_{i_{v}}-m_{i_{a}}$ and if $C_{\alpha_{i_{1}}\alpha_{i_{a}}\beta_{i}\beta_{i}}$ does not exist, then $0=m_{i_{v}}$, which is impossible, as $m_{i_{v}}>0$. Finally, note that if $C_{\alpha_{i_{2}}\alpha_{i_{p}}\beta_{i}\beta_{i}}$ exists, we have that $\sin{(\alpha_{i_{p}}-\alpha_{i_{2}})}\neq 0$, as otherwise $\alpha_{i_{p}}=\alpha_{i_{2}}$, or $\alpha_{i_{p}}-\alpha_{i_{2}}=\pi$, which, as $\alpha_{i_{p}}\neq\alpha_{i_{2}}$, means that $\pi=\alpha_{i_{p}}-\alpha_{i_{2}}=2\pi-(\alpha_{i_{v}}-\alpha_{i_{1}})$, which means that $\alpha_{i_{p}}-\alpha_{i_{2}}=\alpha_{i_{v}}-\alpha_{i_{1}}$, which means that $\alpha_{i_{p}}-\alpha_{i_{v}}=\alpha_{i_{2}}-\alpha_{i_{1}}>0$, which means that $p>v$, which means that $p\geq v+1$, which means that \begin{align*}\alpha_{i_{2}}-\alpha_{i_{1}}=\alpha_{i_{p}}-\alpha_{i_{v}}\geq\alpha_{i_{v+1}}-\alpha_{i_{v}}>\alpha_{i_{2}}-\alpha_{i_{1}}, \end{align*}
    which is a contradiction. But that then means by (\ref{SINV2 2}) that $0=m_{i_{p}}$, which is impossible, as $m_{i_{p}}>0$. This then finally means by (\ref{COSV2}) that because $C_{\alpha_{i_{1}}\alpha_{i_{a}}\beta_{i}\beta_{i}}$ exists, we have by Lemmas~\ref{Lemma 2} and \ref{Lemma 3} that $0=2m_{i_{v}}$ which is a contradiction, because all masses are positive. We can therefore finally conclude that our assumption that the $(q_{j1},q_{j2})^{T}$, $j\in V_{i}$ can represent the vertices of an irregular polygon is false. This completes the proof that the $(q_{j1},q_{j2})^{T}$, $j\in V_{i}$, represent vertices of a regular polygon. Next we will prove that for any $l$, $k\in\{1,\ldots,n\}$, $V_{l}\neq V_{k}$, we have that $|V_{l}|=|V_{k}|$: If $|V_{l}|\neq |V_{k}|$, $l_{1}\in V_{i}$, $k_{1}\in V_{k}$, then by Lemmas~\ref{Lemma 2} and \ref{Lemma 3} and subtracting (\ref{Crit3}) for $i=l_{1}$ from (\ref{Crit3}) for $i=k_{1}$, we have that
    \begin{align}
      0 &=-\sum\limits_{j\in V_{k}}m_{j}C_{\alpha_{k_{1}}\alpha_{j}\beta_{k}\beta_{k}}+\sum\limits_{j\in V_{l}}m_{j}C_{\alpha_{l_{1}}\alpha_{j}\beta_{l}\beta_{l}}.\label{COSV4}
    \end{align}
    Let $\frac{2\pi}{|V_{k}|}=\alpha_{j_{k}}-\alpha_{k_{1}}$ be the smallest possible positive angle for $j_{k}$, $k_{1}\in V_{k}$ and let $\frac{2\pi}{|V_{l}|}=\alpha_{j_{l}}-\alpha_{l_{1}}$ be the smallest possible positive angle for $j_{l}$, $l_{1}\in V_{l}$. Then by Lemmas~\ref{Lemma 2} and \ref{Lemma 3}, the only way a $C_{\alpha_{k_{1}}\alpha_{j}\beta_{k}\beta_{k}}$ in (\ref{COSV4}) can cancel out, is if there is a $C_{\alpha_{l_{1}}\alpha_{\widehat{j}_{l}}\beta_{l}\beta_{l}}$, $\widehat{j}_{l}\in V_{l}$ in (\ref{COSV4}) to cancel out against. But if $|V_{k}|\neq |V_{l}|$, then we may assume that $|V_{k}|>|V_{l}|$, in which case there is no $j\in V_{l}$ for which $\cos{(\alpha_{j_{k}}-\alpha_{k_{1}})}=\cos{(\alpha_{j}-\alpha_{l_{1}})}$, which contradicts (\ref{COSV4}). This thus proves that the $(q_{j1},q_{j2})^{T}$ represent the vertices of a regular polygon as long as the corresponding $\beta_{j}$ are equal, that polygons related to different $\beta_{j}$ have equal numbers of vertices, that the same is true for the $(q_{j3},q_{j4})^{T}$ if the $\alpha_{j}$ are equal and that polygons related to different $\alpha_{j}$ have equal numbers of vertices as well. However, it may still be possible that two polygons related to two different $\beta_{j}$ do not have the same vertices and that in that case the the combined set of those vertices are not vertices of a regular polygon, or that the combined set of vertices of two polygons related to two different $\alpha_{j}$ do not represent vertices of a regular polygon. We will prove that that possibility can be excluded next:

    If we write $|V_{i}|=u_{\beta}$ and the number of distinct $\beta_{i}$ is $\widehat{p}_{\beta}$, then there might be $\gamma_{1}$, \ldots, $\gamma_{\widehat{p}_{\beta}}$, $0\leq\gamma_{1}\leq\ldots\leq\gamma_{\widehat{p}_{\beta}}$, such that for all $j\in V_{i}$ there is an $l\in\{1,\ldots,u_{\beta}\}$ such that $\alpha_{j}=\frac{2\pi l}{u_{\beta}}+\gamma_{i}$. What we will prove is that the $\gamma_{i}$ have to all be equal, which then means that the $(q_{j1},q_{j2})^{T}$ do not only represent the vertices of a regular polygon for $j\in V_{i}$, but that the $(q_{j1},q_{j2})^{T}$ actually represent the vertices of a regular polygon for $j\in\{1,\ldots,n\}$ and by reusing that argument for the $(q_{j3},q_{j4})^{T}$ instead, we have that the $(q_{j3},q_{j4})^{T}$ actually represent the vertices of a regular polygon for $j\in\{1,\ldots,n\}$: By (\ref{Crit2}) and Lemma~\ref{Lemma 2} we have for all $i$, $l_{1}\in\{1,\ldots,\widehat{p}_{\beta}\}$, $i\neq l_{1}$ that $\sin{(\beta_{l_{1}}-\beta_{i})}=0$, or that there is an $l_{2}\in\{1,\ldots,\widehat{p}_{\beta}\}$ such that $\sin{(\beta_{l_{1}}-\beta_{i})}=-\sin{(\beta_{l_{2}}-\beta_{i})}$. If $\sin{(\beta_{l_{1}}-\beta_{i})}=0$, then $\beta_{i}=\beta_{l_{1}}$, which can be ignored by construction, or $\beta_{i}=\beta_{l_{1}}\pm\pi$, which can be ignored by Lemma~\ref{Lemma 3}. So that means by Lemma~\ref{Lemma 2} that, as  there is an $l_{2}\in\{1,\ldots,\widehat{p}_{\beta}\}$ such that $\sin{(\beta_{l_{1}}-\beta_{i})}=-\sin{(\beta_{l_{2}}-\beta_{i})}$, that either
    \begin{align*}
      \cos{\left(\frac{2\pi l}{u_{\beta}}+\gamma_{l_{1}}-\gamma_{i}\right)}=\cos{\left(\frac{2\pi l}{u_{\beta}}+\gamma_{l_{2}}-\gamma_{i}\right)} \quad
      \textrm{and}\quad\cos{\left(\beta_{i}-\beta_{l_{1}}\right)}=\cos{\left(\beta_{i}-\beta_{l_{2}}\right)},
    \end{align*}
    or
    \begin{align*}
      \cos{\left(\frac{2\pi l}{u_{\beta}}+\gamma_{l_{1}}-\gamma_{i}\right)}=-\cos{\left(\frac{2\pi l}{u_{\beta}}+\gamma_{l_{2}}-\gamma_{i}\right)}\quad
      \textrm{and}\quad\cos{\left(\beta_{i}-\beta_{l_{1}}\right)}=-\cos{\left(\beta_{i}-\beta_{l_{2}}\right)}
    \end{align*}
    for all $l\in\{1,\ldots,u_{\beta}\}$. Writing out the first of these four equations gives that $\gamma_{l_{1}}=\gamma_{l_{1}}$, or
    \begin{align*}
      \frac{2\pi l}{u_{\beta}}+\gamma_{l_{1}}-\gamma_{i}=-\left(\frac{2\pi l}{u_{\beta}}+\gamma_{l_{2}}-\gamma_{i}\right)\textrm{ }(\textrm{mod }2\pi).
    \end{align*}
    The first of these possibilities means that indeed the $\gamma_{i}$ are equal and the second means that
    \begin{align*}
      \gamma_{l_{1}}+\gamma_{l_{2}}-2\gamma_{i}=-\frac{4\pi l}{u_{\beta}}\textrm{ }(\textrm{mod }2\pi)
    \end{align*}
    for all $l\in\{1,\ldots,u_{\beta}\}$, which means that $\gamma_{l_{1}}+\gamma_{l_{2}}-2\gamma_{i}$ is multivalued, which is a contradiction, or $u_{\beta}=2$, in which case by Lemma~\ref{Lemma 2}, (\ref{Crit1}) and (\ref{Crit2}) there are $\widehat{i}$, $\widehat{j}$, $k\in\{1,\ldots,n\}$ such that \begin{align*}
      \sin{(\alpha_{\widehat{j}}-\alpha_{\widehat{i}})}=\sin{(\alpha_{k}-\alpha_{\widehat{i}})}\sin{(\alpha_{k}-\alpha_{\widehat{j}})}=0
    \end{align*}
    and
    \begin{align*}
      \sin{(\beta_{\widehat{j}}-\beta_{\widehat{i}})}=\sin{(\beta_{k}-\beta_{\widehat{i}})}\sin{(\beta_{k}-\alpha_{\widehat{j}})}=0,
    \end{align*}
    which means that $\langle q_{\widehat{i}},q_{\widehat{j}}\rangle^{2}=1$, $\langle q_{\widehat{i}},q_{k}\rangle^{2}=1$, or $\langle q_{k},q_{\widehat{j}}\rangle^{2}=1$, which means that the right-hand side of (\ref{EquationsOfMotion Curved}) is undefined for $i=\widehat{i}$, $i=\widehat{j}$, or $i=k$, which is a contradiction.

    Writing out
    \begin{align}\label{HELP}
      \cos{\left(\frac{2\pi l}{u_{\beta}}+\gamma_{l_{1}}-\gamma_{i}\right)}=-\cos{\left(\frac{2\pi l}{u_{\beta}}+\gamma_{l_{2}}-\gamma_{i}\right)}
    \end{align}
    gives that $\gamma_{l_{1}}=\gamma_{l_{2}}\pm\pi$, or
    \begin{align*}
      \frac{2\pi l}{u_{\beta}}+\gamma_{l_{1}}-\gamma_{i}=\pi-\left(\frac{2\pi l}{u_{\beta}}+\gamma_{l_{2}}-\gamma_{i}\right)\textrm{ }(\textrm{mod }2\pi).
    \end{align*}
    The first of these identities means by (\ref{HELP}), Lemma~\ref{Lemma 2} and the fact that \begin{align*}\sin{(\beta_{l_{1}}-\beta_{i})}=-\sin{(\beta_{l_{2}}-\beta_{i})}\end{align*} that there are $j_{1}$, $j_{2}\in\{1,\ldots,n\}$ such that $\langle q_{j_{1}},q_{j_{2}}\rangle=-1$, which means that for $i=j_{1}$ the right-hand side of (\ref{EquationsOfMotion Curved}) is undefined.
    The second of these identities means that
    \begin{align*}
      \gamma_{l_{1}}+\gamma_{l_{2}}-2\gamma_{i}=\pi-\frac{4\pi l}{u_{\beta}}\textrm{ }(\textrm{mod }2\pi)
    \end{align*}
    for all $l\in\{1,\ldots,u_{\beta}\}$, which again means that $\gamma_{l_{1}}+\gamma_{l_{2}}-2\gamma_{i}$ is multivalued, which is a contradiction, or that $u_{\beta}=2$, which again means that there are $\widehat{i}$, $\widehat{j}$, $k\in\{1,\ldots,n\}$ such that $\langle q_{\widehat{i}},q_{\widehat{j}}\rangle^{2}=1$, $\langle q_{\widehat{i}},q_{k}\rangle^{2}=1$, or $\langle q_{k},q_{\widehat{j}}\rangle^{2}=1$, which is again a contradiction. This finally proves that the $(q_{j1},q_{j2})^{T}$ actually represent the vertices of a regular polygon of $u_{\beta}$ vertices for $j\in\{1,\ldots,n\}$ and by reusing that argument for the $(q_{j3},q_{j4})^{T}$ instead, we have that the $(q_{j3},q_{j4})^{T}$ actually represent the vertices of a regular polygon for $j\in\{1,\ldots,n\}$.
    What remains to be proven is that all the masses are equal:
    Let $i_{1}$, $i_{2}\in V_{i}$, $i_{1}\neq i_{2}$. Then because the $(q_{j1},q_{j2})^{T}$, $j\in V_{i}$ represent vertices of a regular polygon, there are $j_{1}$, $j_{2}\in V_{i}$ such that
    \begin{align*}
      \cos{(\alpha_{j_{1}}-\alpha_{i_{1}})}=\cos{(\alpha_{j_{2}}-\alpha_{i_{2}})}=\cos{(\alpha_{i_{1}}-\alpha_{i_{2}})},
    \end{align*} which means that subtracting (\ref{Crit3}) for $i=i_{1}$ from (\ref{Crit3}) for $i=i_{2}$ gives by Lemmas~\ref{Lemma 2} and \ref{Lemma 3} that
    \begin{align}\label{Mass1}
      0=(m_{i_{1}}-m_{i_{2}}+m_{j_{1}}-m_{j_{2}})\frac{1-\cos{(\alpha_{i_{1}}-\alpha_{i_{2}})}}{(1-(1+r^{2}(\cos{(\alpha_{i_{1}}-\alpha_{i_{2}})}-1))^{2})^{\frac{3}{2}}},
    \end{align}
    by Lemma~\ref{Lemma 2}, Lemma~\ref{Lemma 3} and (\ref{Crit1}) for $i=i_{1}$ that
    \begin{align}\label{Mass2}
      0=(m_{i_{1}}-m_{j_{1}})\frac{\sin{(\alpha_{i_{1}}-\alpha_{i_{2}})}}{(1-(1+r^{2}(\cos{(\alpha_{i_{1}}-\alpha_{i_{2}})}-1))^{2})^{\frac{3}{2}}},
    \end{align}
    and by Lemma~\ref{Lemma 2}, Lemma~\ref{Lemma 3} and (\ref{Crit1}) for $i=i_{2}$ that
    \begin{align}\label{Mass3}
      0=(m_{i_{2}}-m_{j_{2}})\frac{\sin{(\alpha_{i_{1}}-\alpha_{i_{2}})}}{(1-(1+r^{2}(\cos{(\alpha_{i_{1}}-\alpha_{i_{2}})}-1))^{2})^{\frac{3}{2}}},
    \end{align}
    so combining (\ref{Mass1})--(\ref{Mass3}), we find that $m_{i_{1}}=m_{i_{2}}$ if $\sin{(\alpha_{i_{1}}-\alpha_{i_{2}})}\neq 0$. As for any $i_{1}\in V_{i}$ there is by construction at most one $i_{2}\in V_{i}$, $i_{2}\neq i_{1}$, for which $\sin{(\alpha_{i_{1}}-\alpha_{i_{2}})}=0$, this means that $m_{i_{1}}=m_{i_{2}}$ for all $i_{1}$, $i_{2}\in V_{i}$.

    Now let $m_{j}=M_{i}$ for all $j\in V_{i}$, let $i_{1}$, $i_{2}\in\{1,\ldots,n\}$, $V_{i_{1}}\neq V_{i_{2}}$, $j_{1}\in V_{i_{1}}$ and $j_{2}\in V_{i_{2}}$. Then subtracting (\ref{Crit3}) for $i=j_{1}$ from (\ref{Crit3}) for $i=j_{2}$ gives, again by Lemmas~\ref{Lemma 2} and \ref{Lemma 3},
    \begin{align*}
      0&=M_{i_{2}}\sum\limits_{j\in V_{i_{2}},\textrm{ }j\neq j_{2}}\frac{1-\cos{(\alpha_{j}-\alpha_{j_{2}})}}{(1-(1+r^{2}(\cos{(\alpha_{j}-\alpha_{j_{2}})}-1))^{2})^{\frac{3}{2}}}\\
      &-M_{i_{1}}\sum\limits_{j\in V_{i_{1}},\textrm{ }j\neq j_{1}}\frac{1-\cos{(\alpha_{j}-\alpha_{j_{1}})}}{(1-(1+r^{2}(\cos{(\alpha_{j}-\alpha_{j_{1}})}-1))^{2})^{\frac{3}{2}}},
    \end{align*}
    which as the $(q_{j1},q_{j2})^{T}$ represent the vertices of a regular polygon of $u_{\beta}$ vertices can be rewritten as
    \begin{align*}
      0&=M_{i_{2}}\sum\limits_{\substack{j=1\\j\neq j_{2}}}^{u_{\beta}}\frac{1-\cos{\frac{2\pi(j-j_{2})}{u_{\beta}}}}{\left(1-\left(1+r^{2}\left(\cos{\frac{2\pi(j-j_{2})}{u_{\beta}}}-1\right)\right)^{2}\right)^{\frac{3}{2}}}\\
      &-M_{i_{1}}\sum\limits_{\substack{j=1\\j\neq j_{1}}}^{u_{\beta}}\frac{1-\cos{\frac{2\pi(j-j_{1})}{u_{\beta}}}}{\left(1-\left(1+r^{2}\left(\cos{\frac{2\pi(j-j_{1})}{u_{\beta}}}-1\right)\right)^{2}\right)^{\frac{3}{2}}}.
    \end{align*}
    Rewriting the first of these two sums in terms of $k_{2}=j-j_{2}$ instead of $j$ and the second sum in terms of $k_{1}=j-j_{1}$ instead of $j$ and then replacing $k_{2}$ and $k_{1}$ with $j$ then gives
    \begin{align*}
      0&=M_{i_{2}}\sum\limits_{j=1}^{u_{\beta}-1}\frac{1-\cos{\frac{2\pi j}{u_{\beta}}}}{\left(1-\left(1+r^{2}\left(\cos{\frac{2\pi j}{u_{\beta}}}-1\right)\right)^{2}\right)^{\frac{3}{2}}}\\
      &-M_{i_{1}}\sum\limits_{j=1}^{u_{\beta}-1}\frac{1-\cos{\frac{2\pi j}{u_{\beta}}}}{\left(1-\left(1+r^{2}\left(\cos{\frac{2\pi j}{u_{\beta}}}-1\right)\right)^{2}\right)^{\frac{3}{2}}}\\
      &=(M_{i_{2}}-M_{i_{1}})\sum\limits_{j=1}^{u_{\beta}-1}\frac{1-\cos{\frac{2\pi j}{u_{\beta}}}}{\left(1-\left(1+r^{2}\left(\cos{\frac{2\pi j}{u_{\beta}}}-1\right)\right)^{2}\right)^{\frac{3}{2}}},
    \end{align*}
    so $M_{i_{2}}=M_{i_{1}}$. This then finally proves that all masses are equal.
  \section{Proof of Theorem~\ref{Main Theorem 2}}\label{Section proof of main theorem 2}
      If there exists a positive elliptic-elliptic rotopulsator solution of (\ref{EquationsOfMotion Curved}), $r_{i}=r$ and $\rho_{i}=\rho$ independent of $i$ and not constant, for which $\|q_{i}(t)-q_{j}(t)\|$ is constant for all $i$, $j\in\{1,\ldots,n\}$, then we have that $\|q_{i}-q_{j}\|^{2}=\widehat{C}_{ij}$ for all $i$, $j\in\{1,\ldots,n\}$, for certain constants $\widehat{C}_{ij}$, which means that for all $i$, $j\in\{1,\ldots,n\}$ we have, using that $r^{2}+\rho^{2}=1$ and writing out $\|q_{i}-q_{j}\|^{2}$ using the expressions for $q_{i}$ and $q_{j}$, that
      \begin{align}
        \widehat{C}_{ij}&=r^{2}\left((\cos{(\alpha_{j}-\alpha_{i})}-1)^{2}+\sin^{2}{(\alpha_{j}-\alpha_{i})}\right)\nonumber\\
        &+\rho^{2}\left((\cos{(\beta_{j}-\beta_{i})}-1)^{2}+\sin^{2}{(\beta_{j}-\beta_{i})}\right)\nonumber\\
        &=2r^{2}\left(\cos{(\beta_{j}-\beta_{i})}-\cos{(\alpha_{j}-\alpha_{i})}\right)+2\left(1-\cos{(\beta_{j}-\beta_{i})}\right). \label{ConfigurationFixedEquation}
      \end{align}
      As $r$ is not constant, for (\ref{ConfigurationFixedEquation}) to hold, we need that \begin{align*}\cos{(\alpha_{j}-\alpha_{i})}-\cos{(\beta_{j}-\beta_{i})}= 0\end{align*} for all $i$, $j\in\{1,\ldots,n\}$, $i\neq j$.

      Suppose that there are $i$, $j$, $k\in\{1,\ldots,n\}$, $i\neq j$, $i\neq k$, $j\neq k$, such that $\alpha_{j}-\alpha_{i}=\beta_{j}-\beta_{i}\textrm{ }(\textrm{mod } 2\pi)$ and $\alpha_{j}-\alpha_{k}=-(\beta_{j}-\beta_{k})\textrm{ }(\textrm{mod } 2\pi)$. Then
      \begin{align}\label{alphaSUM1}
        \alpha_{k}-\alpha_{i}=(\beta_{j}-\beta_{k})+(\beta_{j}-\beta_{i})\textrm{ }(\textrm{mod } 2\pi)
      \end{align}
      and
      \begin{align}\label{betaSUM1}
        \beta_{k}-\beta_{i}=(\alpha_{j}-\alpha_{k})+(\alpha_{j}-\alpha_{i})\textrm{ }(\textrm{mod } 2\pi).
      \end{align}
      Using that then $\cos{(\alpha_{j}-\alpha_{i})}=\cos{(\beta_{j}-\beta_{i})}$, $\cos{(\alpha_{j}-\alpha_{k})}=\cos{(\beta_{j}-\beta_{k})}$, $\sin{(\alpha_{j}-\alpha_{i})}=\sin{(\beta_{j}-\beta_{i})}$ and $\sin{(\alpha_{j}-\alpha_{k})}=-\sin{(\beta_{j}-\beta_{k})}$, we find by taking cosines on both sides of (\ref{alphaSUM1}) and (\ref{betaSUM1}) that
      \begin{align}\label{alphaSUM2}
        \cos{(\alpha_{k}-\alpha_{i})}=\cos{(\beta_{j}-\beta_{k})}\cos{(\beta_{j}-\beta_{i})}-\sin{(\beta_{j}-\beta_{k})}\sin{(\beta_{j}-\beta_{i})}
      \end{align}
      and
      \begin{align}\label{betaSUM2}
        \cos{(\beta_{k}-\beta_{i})}&=\cos{(\alpha_{j}-\alpha_{k})}\cos{(\alpha_{j}-\alpha_{i})}-\sin{(\alpha_{j}-\alpha_{k})}\sin{(\alpha_{j}-\alpha_{i})}\nonumber\\
        &=\cos{(\beta_{j}-\beta_{k})}\cos{(\beta_{j}-\beta_{i})}+\sin{(\beta_{j}-\beta_{k})}\sin{(\beta_{j}-\beta_{i})}
      \end{align}
      respectively.
      Because $\cos{(\alpha_{k}-\alpha_{i})}=\cos{(\beta_{k}-\beta_{i})}$, comparing the righthand sides of (\ref{alphaSUM2}) and (\ref{betaSUM2}) now gives that
      \begin{align*}
        \sin{(\beta_{j}-\beta_{k})}\sin{(\beta_{j}-\beta_{i})}=0.
      \end{align*}
      We cannot have that $\beta_{j}=\beta_{k}$, or $\beta_{j}=\beta_{i}$, as $\alpha_{j}-\alpha_{i}=\beta_{j}-\beta_{i}\textrm{ }(\textrm{mod } 2\pi)$ and $\alpha_{j}-\alpha_{k}=-(\beta_{j}-\beta_{k})\textrm{ }(\textrm{mod } 2\pi)$, which would by construction mean that $q_{j}=q_{k}$, or $q_{j}=q_{i}$. So $\beta_{j}-\beta_{k}=\pi\textrm{ }(\textrm{mod } 2\pi)$, giving $\alpha_{j}-\alpha_{k}=\pi\textrm{ }(\textrm{mod } 2\pi)$ and therefore $q_{j}=-q_{k}$, or $\beta_{j}-\beta_{i}=\pi\textrm{ }(\textrm{mod } 2\pi)$, leading to $\alpha_{j}-\alpha_{i}=\pi\textrm{ }(\textrm{mod } 2\pi)$ and therefore $q_{j}=-q_{i}$. But then there are $i$, $j\in\{1,\ldots,n\}$, $i\neq j$, such that $\langle q_{i},q_{j}\rangle^{2}=1$, in which case there is a term in the sum on the righthand side of (\ref{EquationsOfMotion Curved}) that is undefined, which is a contradiction. We therefore conclude that for all $i$, $j\in\{1,\ldots,n\}$ we have that \begin{align*}\cos{(\alpha_{j}-\alpha_{i})}=\cos{(\beta_{j}-\beta_{i})}\quad\textrm{and}\quad\sin{(\alpha_{j}-\alpha_{i})}=\sin{(\beta_{j}-\beta_{i})}.\end{align*}
      We may thus assume that $\alpha_{i}=\beta_{i}$ for all $\{1,\ldots,n\}$.
      It therefore follows that for all $j\in\{1,\ldots,n\}$ we have that
      \begin{align*}
        q_{j}=\begin{pmatrix}
          rR(\theta)\begin{pmatrix}\cos{\alpha_{j}}\\
          \sin{\alpha_{j}}\end{pmatrix}\\
          \rho R(\phi)\begin{pmatrix}\cos{\alpha_{j}}\\
          \sin{\alpha_{j}}\end{pmatrix}
        \end{pmatrix}=(\cos{\alpha_{j}})\begin{pmatrix}
          rR(\theta)\begin{pmatrix}1\\
          0\end{pmatrix}\\
          \rho R(\phi)\begin{pmatrix}1\\
          0\end{pmatrix}
        \end{pmatrix}+(\sin{\alpha_{j}})\begin{pmatrix}
          rR(\theta)\begin{pmatrix}0\\
          1\end{pmatrix}\\
          \rho R(\phi)\begin{pmatrix}0\\
          1\end{pmatrix}
        \end{pmatrix}.
      \end{align*}
      Hence all the $q_{j}$ are spanned by the linearly independent vectors
      \begin{align*}
        \begin{pmatrix}
          rR(\theta)\begin{pmatrix}1\\
          0\end{pmatrix}\\
          \rho R(\phi)\begin{pmatrix}1\\
          0\end{pmatrix}
        \end{pmatrix}\textrm{ and }\begin{pmatrix}
          rR(\theta)\begin{pmatrix}0\\
          1\end{pmatrix}\\
          \rho R(\phi)\begin{pmatrix}0\\
          1\end{pmatrix}
        \end{pmatrix}.
      \end{align*}
      This then finally shows that all the $q_{j}$ have to lie in the same (rotating) plane, which is only possible if all the point masses lie on a great circle and the point masses represent vertices of a polygon.

      What remains is to show that positive elliptic-elliptic rotopulsators for which $r$ and $\rho$ are not constant and $\|q_{i}(t)-q_{j}(t)\|$ are constant for all $i$, $j\in\{1,\ldots,n\}$ exist: This comes down to finding out under which conditions on the $\alpha_{i}$, $i\in\{1,\ldots,n\}$, $\beta_{i}$, $i\in\{1,\ldots,n\}$, we have that (\ref{Crit1})--(\ref{Crit3}) are met. As for any such rotopulsator we have that $\alpha_{i}=\beta_{i}$ for all $i\in\{1,\ldots,n\}$, proving that these three identities are met reduces by Lemma~\ref{Lemma 1} and because $\cos{(\alpha_{j}-\alpha_{i})}=\cos{(\beta_{j}-\beta_{i})}$ for all $i$, $j\in\{1,\ldots,n\}$ to proving that there exist constants $C_{1}$, $C_{2}$, $\alpha_{1},\textrm{ }\ldots,\textrm{ }\alpha_{n}$ such that
      \begin{align}
      0=\sum\limits_{\substack{j=1\\j\neq i}}^{n}\frac{m_{j}\sin{(\alpha_{j}-\alpha_{i})}}{\left|\sin{(\alpha_{j}-\alpha_{i})}\right|^{3}}\quad
      \textrm{and}\quad\ddot{r}+r\rho^{2}\left(\frac{C_{2}^{2}}{\rho^{4}}-\frac{C_{1}^{2}}{r^{4}}\right)+r\left(\frac{\dot{r}}{\rho}\right)^{2}=0. \label{Crit3C}
    \end{align}
    Note that
    \begin{align*}
      \frac{d}{dt}\left((\dot{r})^{2}+(\dot{\rho})^{2}+\frac{C_{1}^{2}}{r^{2}}+\frac{C_{2}^{2}}{\rho^{2}}\right)=\frac{2\dot{r}}{\rho^{2}}\left(\ddot{r}+r\rho^{2}\left(\frac{C_{2}^{2}}{\rho^{4}}-\frac{C_{1}^{2}}{r^{4}}\right)+r\left(\frac{\dot{r}}{\rho}\right)^{2}\right),
    \end{align*}
    so by (\ref{Crit3C}) we have that
    \begin{align*}
      (\dot{r})^{2}+(\dot{\rho})^{2}+\frac{C_{1}^{2}}{r^{2}}+\frac{C_{2}^{2}}{\rho^{2}}=C
    \end{align*}
    for a suitable constant $C$, which, using that $r\dot{r}+\rho\dot{\rho}=0$, can be rewritten as
    \begin{align*}
      \left(\frac{\dot{r}}{\rho}\right)^{2}+\frac{C_{1}^{2}}{r^{2}}+\frac{C_{2}^{2}}{\rho^{2}}=C,
    \end{align*}
    which is a first-order ordinary differential equation, which has nonconstant solutions $r$ for suitable choices of $C$, $C_{1}$ and $C_{2}$. So that means that (\ref{Crit3C}) holds true for nonconstant $r$. Finally, the set of equations described by (\ref{Crit3C}) is the exact set of equations used to prove Theorem~4 in \cite{D2}, where it was proven that even for $n=3$ the configuration of the point masses need not be a regular polygon. Proving that for general $n$ solutions exist can be done by choosing $\alpha_{j}=\frac{2\pi j}{n}$, $j\in\{1,\ldots,n\}$ and all masses equal and repeat the proof in \cite{D2} of Theorem~1, or observing that the right-hand side of the first formula in (\ref{Crit3C}) then becomes
    \begin{align}
      \sum\limits_{\substack{j=1\\j\neq i}}^{n}\frac{m_{j}\sin{(\alpha_{j}-\alpha_{i})}}{\left|\sin{(\alpha_{j}-\alpha_{i})}\right|^{3}}=\sum\limits_{\substack{j=1\\j\neq i}}^{n}\frac{m_{j}\sin{\frac{2\pi(j-i)}{n}}}{\left|\sin{\frac{2\pi(j-i)}{n}}\right|^{3}}. \label{SineGreatCircle}
    \end{align}
    Rewriting (\ref{SineGreatCircle}) in terms of $\widehat{j}=j-i$ and then replacing $\widehat{j}$ with $j$ and writing $m_{j}=m$ gives
    \begin{align}
      \sum\limits_{\substack{j=1\\j\neq i}}^{n}\frac{m_{j}\sin{(\alpha_{j}-\alpha_{i})}}{\left|\sin{(\alpha_{j}-\alpha_{i})}\right|^{3}}=\sum\limits_{j=1}^{n-1}\frac{m\sin{\frac{2\pi j}{n}}}{\left|\sin{\frac{2\pi j}{n}}\right|^{3}}\label{SineGreatCircle2}
    \end{align}
    and as
    \begin{align*}
      \sum\limits_{j=1}^{n-1}\frac{m\sin{\frac{2\pi j}{n}}}{\left|\sin{\frac{2\pi j}{n}}\right|^{3}}=\sum\limits_{j=1}^{n-1}\frac{m\sin{\frac{2\pi (n-j)}{n}}}{\left|\sin{\frac{2\pi(n-j)}{n}}\right|^{3}}=-\sum\limits_{j=1}^{n-1}\frac{m\sin{\frac{2\pi j}{n}}}{\left|\sin{\frac{2\pi j}{n}}\right|^{3}},
    \end{align*}
    that means by (\ref{SineGreatCircle2}) that
    \begin{align*}
      \sum\limits_{\substack{j=1\\j\neq i}}^{n}\frac{m_{j}\sin{(\alpha_{j}-\alpha_{i})}}{\left|\sin{(\alpha_{j}-\alpha_{i})}\right|^{3}}=0,
    \end{align*}
    which proves that (\ref{Crit3C}) is met. This completes the proof.
  \section{Proof of Theorem~\ref{Main Theorem 1a}}\label{Section proof of main theorem 1a}
    Let $V$ be the set of all $j\in\{1,\ldots,n\}$ for which there is an $i\in\{1,\ldots,n\}$ such that $\cos{(\alpha_{j}-\alpha_{i})}\neq\cos{(\beta_{j}-\beta_{i})}$. If $V$ is not empty, then relabeling the point masses if necessary, we may assume that $V=\{1,\ldots,k\}$, $k\leq n$. Additionally, again relabeling the point masses if necessary, we may assume that $\alpha_{1}<\alpha_{2}<\ldots<\alpha_{k}$, 
    where we define $\alpha_{j+kp}=\alpha_{j}+2\pi p$, $p\in\mathbb{Z}$. To prove that the $(q_{j1},q_{j2})^{T}$, $j\in V$ represent vertices of a regular polygon, it turns out we cannot copy the exact approach used in the proof of Theorem~\ref{Main Theorem 1}. While the idea we will use in this proof is similar, it requires a bit of tweaking:
    Let $\widehat{i}\in V$. We have by Lemma~\ref{Lemma 2} that for all $\widehat{j}\in\{1,\ldots,k\}$, $\widehat{j}\neq\widehat{i}$, there might be one term
    \begin{align*}
      \frac{m_{j}\sin{(\alpha_{j}-\alpha_{\widehat{i}})}}{\left(1-\left(\cos{(\beta_{j}-\beta_{\widehat{i}})}+r^{2}\left(\cos{(\alpha_{j}-\alpha_{\widehat{i}})}-\cos{(\beta_{j}-\beta_{\widehat{i}})}\right)\right)^{2}\right)^{\frac{3}{2}}}
    \end{align*}
    in (\ref{Crit1}) that can cancel out against
    \begin{align*}
      \frac{m_{\widehat{j}}\sin{(\alpha_{\widehat{j}}-\alpha_{\widehat{i}})}}{\left(1-\left(\cos{(\beta_{\widehat{j}}-\beta_{\widehat{i}})}+r^{2}\left(\cos{(\alpha_{\widehat{j}}-\alpha_{\widehat{i}})}-\cos{(\beta_{\widehat{j}}-\beta_{\widehat{i}})}\right)\right)^{2}\right)^{\frac{3}{2}}}
    \end{align*}
    for which $\cos{(\alpha_{j}-\alpha_{\widehat{i}})}=\cos{(\alpha_{\widehat{j}}-\alpha_{\widehat{i}})}$. We will refer to that $j$ as $j_{1}$. Additionally, again by Lemma~\ref{Lemma 2}, there might be such terms for which $\cos{(\alpha_{j}-\alpha_{\widehat{i}})}=-\cos{(\alpha_{\widehat{j}}-\alpha_{\widehat{i}})}$, in which case we, again by Lemma~\ref{Lemma 2}, also have that $\cos{(\beta_{j}-\beta_{\widehat{i}})}=-\cos{(\beta_{\widehat{j}}-\beta_{\widehat{i}})}$. There are at most two such $j$s and we will refer to them as $j_{2}$ and $j_{3}$. Note that $\sin{(\alpha_{\widehat{j}}-\alpha_{\widehat{i}})}\neq 0$: If $\sin{(\alpha_{\widehat{j}}-\alpha_{\widehat{i}})}=0$, then $\alpha_{\widehat{j}}=\alpha_{\widehat{i}}$, or $\alpha_{\widehat{j}}=\alpha_{\widehat{i}}\pm\pi$. If $\alpha_{\widehat{j}}=\alpha_{\widehat{i}}$, then we have a contradiction, as by construction the $\alpha$s are distinct. If $\alpha_{\widehat{j}}=\alpha_{\widehat{i}}\pm\pi$, then by Lemma~\ref{Lemma 2} and (\ref{Crit2}) we have that $\sin{(\beta_{\widehat{j}}-\beta_{\widehat{i}})}=0$ as well, which means that as $\beta_{\widehat{j}}\neq\beta_{\widehat{i}}$ for the same reason as why $\alpha_{\widehat{j}}\neq\alpha_{\widehat{i}}$ that $\beta_{\widehat{j}}=\beta_{\widehat{i}}\pm\pi$, which means that $\langle q_{\widehat{i}},q_{\widehat{j}}\rangle=-1$, which makes the right-hand side of (\ref{EquationsOfMotion Curved}) undefined for $i=\widehat{i}$. This also means that $\sin{(\beta_{\widehat{j}}-\beta_{\widehat{i}})}\neq 0$.
    If only $j_{1}$ exists, then by Lemma~\ref{Lemma 2} and (\ref{Crit1}) we have as $\alpha_{\widehat{j}}$ and $\alpha_{j_{1}}$ are distinct that $m_{\widehat{j}}=m_{j_{1}}$. If only either $j_{2}$, or $j_{3}$ exist, then we may assume that only $j_{2}$ exists and because $\langle q_{\widehat{j}},q_{j_{2}}\rangle=-1$ would mean that the right-hand side of (\ref{EquationsOfMotion Curved}) is undefined for $i=\widehat{j}$, we have by Lemma~\ref{Lemma 2} and (\ref{Crit1}), or (\ref{Crit2}) that $0=m_{\widehat{j}}+m_{j_{2}}>0$, which is a contradiction. If $j_{1}$ and $j_{2}$ exist, then because the $\alpha_{i}$ are distinct for all $i\in\{1,\ldots,n\}$, the $\beta_{i}$ are distinct for all $i\in\{1,\ldots,n\}$, $\langle q_{\widehat{j}},q_{j_{2}}\rangle\neq -1$ and $\langle q_{j_{1}},q_{j_{2}}\rangle\neq -1$, we have by (\ref{Crit1}), (\ref{Crit2}) and Lemma~\ref{Lemma 2} that
    \begin{align*}
      0=m_{\widehat{j}}-m_{j_{1}}+m_{j_{2}}\quad\textrm{and}\quad 0=m_{\widehat{j}}-m_{j_{1}}-m_{j_{2}},
    \end{align*}
    which means that $0=m_{j_{2}}>0$, which is a contradiction. If $j_{1}$ does not exist, but $j_{2}$ and $j_{3}$ do, then by (\ref{Crit1}), (\ref{Crit2}), Lemma~\ref{Lemma 2}, $\langle q_{\widehat{j}},q_{j_{2}}\rangle\neq -1$ and $\langle q_{\widehat{j}},q_{j_{3}}\rangle\neq -1$ we have that
    \begin{align*}
      0=m_{\widehat{j}}+m_{j_{2}}-m_{j_{3}}\quad\textrm{and}\quad
      0=m_{\widehat{j}}-m_{j_{2}}+m_{j_{3}}.
    \end{align*}
    Adding these last two equations gives $0=2m_{\widehat{j}}>0$, which is a contradiction. The only possibility left is then that $j_{1}$, $j_{2}$ and $j_{3}$ all exist. In that case we get by (\ref{Crit1}), (\ref{Crit2}), Lemma~\ref{Lemma 2}, $\langle q_{\widehat{j}},q_{j_{2}}\rangle\neq -1$, $\langle q_{\widehat{j}},q_{j_{3}}\rangle\neq -1$, $\langle q_{j_{1}},q_{j_{2}}\rangle\neq -1$ and $\langle q_{j_{1}},q_{j_{3}}\rangle\neq -1$ that
    \begin{align*}
      0=m_{\widehat{j}}-m_{j_{1}}+m_{j_{2}}-m_{j_{3}}\quad\textrm{and}\quad 0=m_{\widehat{j}}-m_{j_{1}}-m_{j_{2}}+m_{j_{3}}.
    \end{align*}
    By these two equations we then get that $m_{\widehat{j}}=m_{j_{1}}$ and $m_{j_{2}}=m_{j_{3}}$.
    From this relatively long argument, we can conclude that $j_{1}$ has to exist, which means that in particular there is such a $j_{1}$ for $\widehat{j}=\widehat{i}+1$, which then means that $\alpha_{\widehat{i}+1}-\alpha_{\widehat{i}}=\alpha_{\widehat{i}}-\alpha_{\widehat{i}-1}$ for all $\widehat{i}\in V$. This proves that the $(q_{j1},q_{j2})^{T}$, $j\in V$ represent the vertices of a regular polygon with $|V|$ vertices, as do the $(q_{j3},q_{j4})^{T}$, $j\in V$. Note that we can now immediately conclude from subtracting (\ref{Crit3})for $i=\widehat{i}$ from (\ref{Crit3}) for $i=\widehat{i}+1$ that $m_{\widehat{i}}=m_{\widehat{i}+1}$, which shows that all masses $m_{j}$, $j\in V$ are equal.

    If $V\neq\{1,\ldots,n\}$, then for $\widehat{j}\notin V$ we have by construction if $|V|=k$, $s\in V$ that
    \begin{align*}
      \cos{\left(\alpha_{\widehat{j}}-\frac{2\pi s}{k}\right)}=\cos{\left(\beta_{\widehat{j}}-\frac{2\pi p_{s}}{k}\right)},
    \end{align*}
    where $p_{1},\textrm{ }\ldots,\textrm{ }p_{k}\in V$ define a permutation of the elements $1$, \ldots, $k$ such that $\cos{(\alpha_{j}-\alpha_{i})}\neq\cos{(\beta_{j}-\beta_{i})}$ for $j$, $i\in V$. So for any $\widehat{j}\notin V$ we have that
    \begin{align*}
      \alpha_{\widehat{j}}-\beta_{\widehat{j}}=\frac{2\pi (s-p_{s})}{k}\textrm{ }(\textrm{mod } 2\pi)\quad\textrm{or}\quad\alpha_{\widehat{j}}+\beta_{\widehat{j}}=\frac{2\pi (s+p_{s})}{k}\textrm{ }(\textrm{mod } 2\pi).
    \end{align*}
    Note that by construction $V$ has at least three elements, which means that there are at least two values of $s_{1}$, $s_{2}\in V$ for which
    \begin{align*}
      &\alpha_{\widehat{j}}-\beta_{\widehat{j}}=\frac{2\pi (s_{i}-p_{s_{i}})}{k}\textrm{ }(\textrm{mod } 2\pi),\quad\text{or}\quad\alpha_{\widehat{j}}+\beta_{\widehat{j}}=\frac{2\pi (s_{i}+p_{s_{i}})}{k}\textrm{ }(\textrm{mod } 2\pi),\quad i\in\{1,2\}.
    \end{align*}
    Therefore
    \begin{align*}
      \frac{2\pi (s_{1}-p_{s_{1}})}{k}=\frac{2\pi (s_{2}-p_{s_{2}})}{k}\textrm{ }(\textrm{mod } 2\pi),\quad\text{or}\quad\frac{2\pi (s_{1}+p_{s_{1}})}{k}=\frac{2\pi (s_{2}+p_{s_{2}})}{k}\textrm{ }(\textrm{mod } 2\pi).
    \end{align*}
    This gives
    \begin{align*}
      \cos{\frac{2\pi (s_{1}-s_{2})}{k}}=\cos{\frac{2\pi (p_{s_{1}}-p_{s_{2}})}{k}},
    \end{align*}
    so $\cos{\left(\alpha_{s_{1}}-\alpha_{s_{2}}\right)}=\cos{\left(\beta_{s_{1}}-\beta_{s_{2}}\right)}$, while $s_{1}$, $s_{2}\in V$. This is a contradiction. This then finally proves that the $(q_{j1},q_{j2})^{T}$, $j\in\{1,\ldots,n\}$ are the vertices of a regular polygon with $n$ vertices, as are the $(q_{j3},q_{j4})^{T}$, $j\in\{1,\ldots,n\}$ and that all masses are equal, or that if $V=\emptyset$ by Theorem~\ref{Main Theorem 2} the $q_{j}$, $j\in\{1,\ldots,n\}$ are the vertices of a (possibly irregular) polygon inscribed in a great circle. This completes the proof.
  \section{Proof of Theorem~\ref{Main Theorem 3}}\label{Section proof of main theorem 3}
    We will set out to create a solution of the desired type that solves (\ref{Crit1})--(\ref{Crit3}): Let $j\in\{1,\ldots,n\}$, \begin{align*}W_{j}=\{s\in\{1,\ldots,n\}\mid\alpha_{s}=\alpha_{j}\},\quad V_{j}=\{s\in\{1,\ldots,n\}\mid\beta_{s}=\beta_{j}\}\end{align*} and select $j_{1}$,\textrm{ }\ldots,\textrm{ }$j_{k}\in\{1,\ldots,n\}$ such that $W_{s_{1}}\cap W_{s_{2}}=\emptyset$ for $s_{1}$, $s_{2}\in\{j_{1},\ldots,j_{k}\}$, $s_{1}\neq s_{2}$ and $\bigcup\limits_{u=1}^{k}W_{j_{u}}=\{1,\ldots,n\}$ and $\widehat{j}_{1}$, \ldots, $\widehat{j}_{p}\in\{1,\ldots,n\}$ such that $V_{s_{1}}\cap V_{s_{2}}=\emptyset$ for $s_{1}$, $s_{2}\in\left\{\widehat{j}_{1},\ldots,\widehat{j}_{p}\right\}$, $s_{1}\neq s_{2}$ and $\bigcup\limits_{u=1}^{p}V_{\widehat{j}_{u}}=\{1,\ldots,n\}$. Additionally, as all masses are equal, we will write $m_{j}=m$. Let $i\in W_{j_{a}}$, $a\in\{1,\ldots,k\}$. Then (\ref{Crit3}) gives
    \begin{align}
      &\delta=r\rho^{2}\sum\limits_{\substack{l=1\\l\neq a}}^{k}\sum\limits_{j\in W_{j_{l}}}\frac{m(\cos{(\alpha_{j}-\alpha_{i})}-\cos{(\beta_{j}-\beta_{i})})}{\left(1-\left(\cos{(\beta_{j}-\beta_{i})}+r^{2}\left(\cos{(\alpha_{j}-\alpha_{i})}-\cos{(\beta_{j}-\beta_{i})}\right)\right)^{2}\right)^{\frac{3}{2}}}\nonumber\\
      &+r\rho^{2}\sum\limits_{j\in W_{j_{a}},\textrm{ }j\neq i}\frac{m(\cos{(\alpha_{j}-\alpha_{i})}-\cos{(\beta_{j}-\beta_{i})})}{\left(1-\left(\cos{(\beta_{j}-\beta_{i})}+r^{2}\left(\cos{(\alpha_{j}-\alpha_{i})}-\cos{(\beta_{j}-\beta_{i})}\right)\right)^{2}\right)^{\frac{3}{2}}}. \label{SummingIt1}
    \end{align}
    Let the number of elements in $W_{j_{l}}$ be $p_{1}$ and the number of elements in $V_{\widehat{j}_{u}}$ be $p_{2}$. Then because for $j\in W_{j_{a}}$ we have that $l=a$ and that the $(q_{j3},q_{j4})^{T}$ represent vertices of a regular polygon and because for $j\in V_{\widehat{j}_{u}}$ we have that the $(q_{j1},q_{j2})^{T}$ represent vertices of a regular polygon, (\ref{SummingIt1}) can be rewritten as
    \begin{align*}
      &\delta=mr\rho^{2}\sum\limits_{\substack{l=1\\l\neq a}}^{p_{2}}\sum\limits_{j=1}^{p_{1}}\frac{\cos{\frac{2\pi}{p_{2}}(l-a)}-\cos{\frac{2\pi}{p_{1}}(j-i)}}{\left(1-\left(\cos{\frac{2\pi}{p_{1}}(j-i)}+r^{2}\left(\cos{\frac{2\pi}{p_{2}}(l-a)}-\cos{\frac{2\pi}{p_{1}}(j-i)}\right)\right)^{2}\right)^{\frac{3}{2}}}\\
      &+mr\rho^{2}\sum\limits_{\substack{j=1\\j\neq i}}^{p_{1}}\frac{1-\cos{\frac{2\pi}{p_{1}}(j-i)}}{\left(1-\left(\cos{\frac{2\pi}{p_{2}}(j-i)}+r^{2}\left(1-\cos{\frac{2\pi}{p_{1}}(j-i)}\right)\right)^{2}\right)^{\frac{3}{2}}},
    \end{align*}
    which in turn can be rewritten as
    \begin{align}
      &\delta=mr\rho^{2}\sum\limits_{\substack{l=1\\l\neq a}}^{p_{2}}\sum\limits_{\substack{j=1\\j\neq i}}^{p_{1}}\frac{\cos{\frac{2\pi}{p_{2}}(l-a)}-\cos{\frac{2\pi}{p_{1}}(j-i)}}{\left(1-\left(\cos{\frac{2\pi}{p_{1}}(j-i)}+r^{2}\left(\cos{\frac{2\pi}{p_{2}}(l-a)}-\cos{\frac{2\pi}{p_{1}}(j-i)}\right)\right)^{2}\right)^{\frac{3}{2}}}\nonumber\\
      &+mr\rho^{2}\sum\limits_{\substack{l=1\\l\neq a}}^{p_{2}}\frac{\cos{\frac{2\pi}{p_{2}}(l-a)}-1}{\left(1-\left(1+r^{2}\left(\cos{\frac{2\pi}{p_{2}}(l-a)}-1\right)\right)^{2}\right)^{\frac{3}{2}}}\nonumber\\
      &+mr\rho^{2}\sum\limits_{\substack{j=1\\j\neq i}}^{p_{1}}\frac{1-\cos{\frac{2\pi}{p_{1}}(j-i)}}{\left(1-\left(\cos{\frac{2\pi}{p_{2}}(j-i)}+r^{2}\left(1-\cos{\frac{2\pi}{p_{1}}(j-i)}\right)\right)^{2}\right)^{\frac{3}{2}}}.  \label{SummingIt2}
    \end{align}
    Rewriting (\ref{SummingIt2}) in terms of $\widehat{l}=l-a$ and $\widehat{j}=j-i$ and then replacing $\widehat{l}$ and $\widehat{j}$ with $l$ and $j$ respectively gives
    \begin{align}
      &\delta=mr\rho^{2}\sum\limits_{l=1}^{p_{2}-1}\sum\limits_{j=1}^{p_{1}-1}\frac{\cos{\frac{2\pi l}{p_{2}}}-\cos{\frac{2\pi j}{p_{1}}}}{\left(1-\left(\cos{\frac{2\pi j}{p_{1}}}+r^{2}\left(\cos{\frac{2\pi l}{p_{2}}}-\cos{\frac{2\pi j}{p_{1}}}\right)^{2}\right)\right)^{\frac{3}{2}}}\nonumber\\
      &+mr\rho^{2}\sum\limits_{l=1}^{p_{2}-1}\frac{\cos{\frac{2\pi l}{p_{2}}}-1}{\left(1-\left(1+r^{2}\left(\cos{\frac{2\pi l}{p_{2}}}-1\right)\right)^{2}\right)^{\frac{3}{2}}}\nonumber\\
      &+mr\rho^{2}\sum\limits_{j=1}^{p_{1}-1}\frac{1-\cos{\frac{2\pi j}{p_{1}}}}{\left(1-\left(\cos{\frac{2\pi j}{p_{2}}}+r^{2}\left(1-\cos{\frac{2\pi j}{p_{1}}}\right)\right)^{2}\right)^{\frac{3}{2}}}.  \label{SummingIt3}
    \end{align}
    Note that (\ref{SummingIt3}) is independent of $i$, which means that (\ref{Crit3}) of Criterion~\ref{Criterion} is met, as long as we can find an $r$ that solves the differential equation posed by (\ref{Crit3}):
    By Lemma~\ref{Lemma 1} we have that $\dot{\theta}=\frac{C_{1}}{r^{2}}$ and $\dot{\phi}=\frac{C_{2}}{\rho^{2}}$ for suitable constants $C_{1}$ and $C_{2}$, which means that
    \begin{align}\label{EnergySequel1}
      \frac{2\dot{r}}{\rho^{2}}\delta &=\frac{2\dot{r}}{\rho^{2}}\left(\ddot{r}+r\rho^{2}\left(\frac{C_{2}^{2}}{\rho^{4}}-\frac{C_{1}^{2}}{r^{4}}\right)+r\left(\frac{\dot{r}}{\rho}\right)^{2}\right)\nonumber\\
      &=\frac{d}{dt}\left((\dot{r})^{2}+(\dot{\rho})^{2}+\frac{C_{1}^{2}}{r^{2}}+\frac{C_{2}^{2}}{\rho^{2}}\right).
    \end{align}
    Additionally, note that
    \begin{align}\label{EnergySequel2}
      &\frac{2\dot{r}}{\rho^{2}}\cdot r\rho^{2}\sum\limits_{\substack{j=1\\j\neq i}}^{n}\frac{m_{j}(\cos{(\alpha_{j}-\alpha_{i})}-\cos{(\beta_{j}-\beta_{i})})}{\left(1-\left(\cos{(\beta_{j}-\beta_{i})}+r^{2}\left(\cos{(\alpha_{j}-\alpha_{i})}-\cos{(\beta_{j}-\beta_{i})}\right)\right)^{2}\right)^{\frac{3}{2}}}\nonumber\\
      &=\frac{d}{dt}\sum\limits_{\substack{j=1\\j\neq i}}^{n}\frac{m_{j}\left(\cos{(\beta_{j}-\beta_{i})}+r^{2}\left(\cos{(\alpha_{j}-\alpha_{i})}-\cos{(\beta_{j}-\beta_{i})}\right)\right)}{\left(1 -\left(\cos{(\beta_{j}-\beta_{i})}+r^{2}\left(\cos{(\alpha_{j}-\alpha_{i})}-\cos{(\beta_{j}-\beta_{i})}\right)\right)^{2}\right)^{\frac{1}{2}}}.
    \end{align}
    Inserting (\ref{SummingIt3}) into (\ref{EnergySequel1}), using (\ref{EnergySequel2}) and integrating the resulting equation now gives
    \begin{align}
      &(\dot{r})^{2}+(\dot{\rho})^{2}+\frac{C_{1}^{2}}{r^{2}}+\frac{C_{2}^{2}}{\rho^{2}}\nonumber\\
      &=\frac{1}{2}m\sum\limits_{l=1}^{p_{2}-1}\sum\limits_{j=1}^{p_{1}-1}\frac{\cos{\frac{2\pi j}{p_{1}}}+r^{2}\left(\cos{\frac{2\pi l}{p_{2}}}-\cos{\frac{2\pi j}{p_{1}}}\right)}{\left(1-\left(\cos{\frac{2\pi j}{p_{1}}}+r^{2}\left(\cos{\frac{2\pi l}{p_{2}}}-\cos{\frac{2\pi j}{p_{1}}}\right)^{2}\right)\right)^{\frac{1}{2}}}\nonumber\\
      &+\frac{1}{2}m\sum\limits_{l=1}^{p_{2}-1}\frac{1+r^{2}\left(\cos{\frac{2\pi l}{p_{2}}}-1\right)}{\left(1-\left(1+r^{2}\left(\cos{\frac{2\pi l}{p_{2}}}-1\right)\right)^{2}\right)^{\frac{1}{2}}}\nonumber\\
      &+\frac{1}{2}m\sum\limits_{j=1}^{p_{1}-1}\frac{\cos{\frac{2\pi j}{p_{2}}}+r^{2}\left(1-\cos{\frac{2\pi j}{p_{1}}}\right)}{\left(1-\left(\cos{\frac{2\pi j}{p_{2}}}+r^{2}\left(1-\cos{\frac{2\pi j}{p_{1}}}\right)\right)^{2}\right)^{\frac{1}{2}}}+C \label{SummingItR2}.
    \end{align}
    for a suitable constant $C$.
    We now have that (\ref{SummingItR2}) is a first-order ordinary differential equation with $r$ as its solution. For suitable $m$, $C$, $C_{1}$ and $C_{2}$ one can find solutions of this equation where $r$ is a constant function. Choosing an initial value of $r$ between $0$ and $1$ that is not equal to one of those constant solutions gives a nonconstant solution. This proves that (\ref{Crit3}) is satisfied and gives rise to both constant and nonconstant solutions $r$.

    Repeating the calculation that lead to (\ref{SummingIt3}), but starting with the righthand side of (\ref{Crit1}) instead gives
    \begin{align}
      &0=mr\rho^{2}\sum\limits_{l=1}^{p_{2}-1}\sum\limits_{j=1}^{p_{1}-1}\frac{\sin{\frac{2\pi l}{p_{2}}}}{\left(1-\left(\cos{\frac{2\pi j}{p_{1}}}+r^{2}\left(\cos{\frac{2\pi l}{p_{2}}}-\cos{\frac{2\pi j}{p_{1}}}\right)\right)^{2}\right)^{\frac{3}{2}}}\nonumber\\
      &+mr\rho^{2}\sum\limits_{l=1}^{p_{2}-1}\frac{\sin{\frac{2\pi l}{p_{2}}}}{\left(1-\left(1+r^{2}\left(\cos{\frac{2\pi l}{p_{2}}}-1\right)\right)^{2}\right)^{\frac{3}{2}}}. \label{SummingIt3SineAlpha}
    \end{align}
    Note that
    \begin{align*}
      &\sum\limits_{l=1}^{p_{2}-1}\frac{\sin{\frac{2\pi l}{p_{2}}}}{\left(1-\left(\cos{\frac{2\pi j}{p_{1}}}+r^{2}\left(\cos{\frac{2\pi l}{p_{2}}}-\cos{\frac{2\pi j}{p_{1}}}\right)^{2}\right)\right)^{\frac{3}{2}}}\\
      &=\sum\limits_{l=1}^{p_{2}-1}\frac{\sin{\frac{2\pi(p_{2}-l)}{p_{2}}}}{\left(1-\left(\cos{\frac{2\pi j}{p_{1}}}+r^{2}\left(\cos{\frac{2\pi(p_{2}-l)}{p_{2}}}-\cos{\frac{2\pi j}{p_{1}}}\right)^{2}\right)\right)^{\frac{3}{2}}},
    \end{align*}
    so
    \begin{align}
      \sum\limits_{l=1}^{p_{2}-1}\frac{\sin{\frac{2\pi l}{p_{2}}}}{\left(1-\left(\cos{\frac{2\pi j}{p_{1}}}+r^{2}\left(\cos{\frac{2\pi l}{p_{2}}}-\cos{\frac{2\pi j}{p_{1}}}\right)^{2}\right)\right)^{\frac{3}{2}}}=0.\label{SineHelp1}
    \end{align}
    By the same argument, we also have that
    \begin{align}
      \sum\limits_{l=1}^{p_{2}-1}\frac{\sin{\frac{2\pi l}{p_{2}}}}{\left(1-\left(1+r^{2}\left(\cos{\frac{2\pi l}{p_{2}}}-1\right)\right)^{2}\right)^{\frac{3}{2}}}=0.\label{SineHelp2}
    \end{align}
    So by (\ref{SummingIt3SineAlpha}), (\ref{SineHelp1}) and (\ref{SineHelp2}), we now have that (\ref{Crit1}) of Criterion~\ref{Criterion} is met as well.

    Switching the roles of the alphas and the betas in this last calculation then finally gives that (\ref{Crit2}) of Criterion~\ref{Criterion} is also met. This proves that positive elliptic-elliptic rotopulsator solutions $q_{1}$, \ldots, $q_{n}$ for which $r_{i}$ and $\rho_{i}$ are independent of $i$, for which the $(q_{i1},q_{i2})^{T}$, $i\in\{1,\ldots,n\}$ and the $(q_{i3},q_{i4})^{T}$, $i\in\{1,\ldots,n\}$ represent vertices of a regular polygon exist if the masses are all equal. This completes the proof.
    \begin{remark}
      It should be mentioned that a generalised result of (\ref{SummingItR2}) for $r_{i}$ and $\rho_{i}$ not necessarily independent of $i$ was obtained in \cite{DK}. However, we needed to explicitly deduce (\ref{SummingItR2}) here to prove that (\ref{Crit3}) of Criterion~\ref{Criterion} was met.
    \end{remark}
\bibliographystyle{amsplain}

\end{document}